\providecommand{\algorithmname}{Algorithm}
\numberwithin{equation}{section}
\numberwithin{figure}{section}
\theoremstyle{plain}
\newtheorem{thm}{\protect\theoremname}
\theoremstyle{remark}
\newtheorem{rem}[thm]{\protect\remarkname}
\theoremstyle{plain}
\newtheorem{prop}[thm]{\protect\propositionname}
\providecommand{\propositionname}{Proposition}
\providecommand{\remarkname}{Remark}
\providecommand{\theoremname}{Theorem}
\begin{document}
\global\long\def\ve{\varepsilon}%
\global\long\def\R{\mathbb{R}}%
\global\long\def\C{\mathbb{C}}%
\global\long\def\F{\mathbb{F}}%
\global\long\def\Rn{\mathbb{R}^{n}}%
\global\long\def\Rd{\mathbb{R}^{d}}%
\global\long\def\E{\mathbb{E}}%
\global\long\def\P{\mathbb{P}}%
\global\long\def\bx{\mathbf{x}}%
\global\long\def\vp{\varphi}%
\global\long\def\ra{\rightarrow}%
\global\long\def\smooth{C^{\infty}}%
\global\long\def\Tr{\mathrm{Tr}}%
\global\long\def\bra#1{\left\langle #1\right|}%
\global\long\def\ket#1{\left|#1\right\rangle }%
\global\long\def\ud{\mathrm{d}}%
\global\long\def\blam{\boldsymbol{\lambda}}%
\global\long\def\Blam{\boldsymbol{\Lambda}}%
\global\long\def\balph{\boldsymbol{\alpha}}%

\newcommand{\ML}[1]{{\color{red}[ML: #1]}}

\title{Fast entropy-regularized SDP relaxations\\for permutation synchronization}
\author{Michael Lindsey\thanks{Department of Mathematics, University of California, Berkeley; Computational Research Division, Lawrence Berkeley National Laboratory} \and Yunpeng Shi\thanks{Department of Mathematics, University of California, Davis}}
\maketitle
\begin{abstract}
We introduce fast randomized algorithms for solving semidefinite programming (SDP) relaxations of the partial permutation synchronization (PPS) problem, a core task in multi-image matching with significant relevance to 3D reconstruction. Our methods build on recent advances in entropy-regularized semidefinite programming and are tailored to the unique structure of PPS, in which the unknowns are partial permutation matrices aligning sparse and noisy pairwise correspondences across images. We prove that entropy regularization resolves optimizer non-uniqueness in standard relaxations, and we develop a randomized solver with nearly optimal scaling in the number of observed correspondences. We also develop several rounding procedures for recovering combinatorial solutions from the implicitly represented primal solution variable, maintaining cycle consistency if desired without harming computational scaling. We demonstrate that our approach achieves state-of-the-art performance on synthetic and real-world datasets in terms of speed and accuracy. Our results highlight PPS as a paradigmatic setting in which entropy-regularized SDP admits both theoretical and practical advantages over traditional low-rank or spectral techniques.
\end{abstract}

\section{Introduction}

Multi-image Matching (MIM) is a fundamental problem in computer vision, critical to numerous applications, especially in 3D reconstruction pipelines such as Structure from Motion (SfM) \cite{sfmsurvey_2017}. The goal of SfM is to reconstruct the 3D geometry of static scenes, like buildings or streets, from multiple 2D images captured by cameras with unknown locations and orientations. Each image is represented by a set of keypoints, where each keypoint ideally corresponds to a unique 3D point from a common universe. Establishing accurate and consistent matches of keypoints across multiple images, or equivalently, aligning keypoints with their correct identities from this shared universe, is essential for precise 3D reconstruction.

Mathematically, the MIM task can be formulated as a \emph{partial permutation synchronization (PPS)} problem \cite{chen_partial, MatchFAME, MatchALS}. In PPS, each image is modeled as a node within a viewing graph. Both pairwise (``relative'') keypoint matches between images and the global (``absolute'') keypoint-to-universe correspondences are represented as partial permutation matrices. Recall that a partial permutation matrix is a binary matrix with at most one nonzero entry per row and per column. Ideally, the relative match between images $i$ and $j$, denoted by $Q^{(i,j)}$, should satisfy:
\[
Q^{(i,j)} = P^{(i)} P^{(j)\top},
\]
where $P^{(i)}$ is the absolute partial permutation corresponding to image $i$. Given noisy and partially observed measurements $\{Q^{(i,j)}\}$, the objective of PPS is to recover the absolute partial permutations $\{P^{(i)}\}$. A precise formulation of the PPS problem is provided in Section \ref{sec:prelim}.

Although MIM is crucial for accurate 3D reconstruction, it poses several computational and practical challenges. In SfM, keypoints are typically detected using local feature detectors such as SIFT \cite{sift04} or deep learning-based methods \cite{deepmatch_survey}. However, due to viewpoint variations, occlusions, and measurement noise, initial pairwise keypoint matches are often incomplete and corrupted. Recovering ground truth correspondences from such imperfect data is at the core of the PPS challenge.

Practically, solving PPS involves determining the set of absolute partial permutation matrices associated with nodes in the viewing graph from potentially corrupted relative measurements encoded on the graph edges. When the partial permutation matrices are full (square, binary, and bistochastic), PPS simplifies to permutation synchronization (PS) \cite{deepti}, a special instance of the more general group synchronization problem. For instance, rotation averaging problems in $SO(3)$ \cite{HartleyAT11_rotation, ChatterjeeG13_rotation, MPLS} are typically addressed by spectral methods \cite{singer2011angular} or semidefinite programming (SDP) relaxations \cite{wang_singer_2013}. However, in image matching, the assumption of uniform keypoint visibility across images rarely holds, making PPS a more realistic and general framework. The PPS problem is notably challenging due to:

\begin{itemize}
    \item \textbf{High dimensionality}: Partial permutation matrices associated with each image can have hundreds or thousands of rows and columns, significantly larger than typical rotation averaging dimensions (usually three). Standard SDP methods scale poorly in such scenarios, presenting severe computational hurdles.

    \item \textbf{Heterogeneous sizes and sparsity}: Unlike the uniform square matrices in PS, relative partial permutation matrices in PPS often vary in size and sparsity. This heterogeneity introduces biases and numerical instabilities into conventional synchronization algorithms.
\end{itemize}

\subsection{Related Work}

Several approaches have addressed the PPS and PS problems. Spectral methods \cite{deepti} compute eigenvectors of block matrices constructed from relative permutations, followed by projection onto permutation matrices using algorithms such as the Hungarian method \cite{Munkres}. Extensions to PPS involve heuristic projection techniques \cite{ConsistentFeature, inv_semi_group}. Methods like MatchEIG \cite{MatchEIG} improve computational efficiency but may yield overly sparse solutions.

SDP-based approaches like MatchLift \cite{chen_partial} handle uniform corruption scenarios but suffer from high computational and memory costs, limiting their scalability. MatchALS \cite{MatchALS} introduces linear constraints to improve computational efficiency, yet still faces limitations in large-scale or real-world scenarios.

Memory efficiency is another critical limitation for SDP-based methods, which typically require storing large dense matrices. Recent nonconvex methods, such as the projected power method (PPM) \cite{Chen_PPM}, use blockwise iterations and offer significant memory reduction, though they are sensitive to initialization. Cycle-edge message passing (CEMP) \cite{cemp} avoids spectral initialization and offers robust, memory-efficient synchronization, with MatchFAME \cite{MatchFAME} extending CEMP to PPS scenario. FCC \cite{FCC} introduces confidence scoring for keypoints but does not guarantee cycle consistency.

In contrast, our approach substantially reduced time/space complexity and allows enforcement of cycle consistency. 

\subsection{Our Contributions}

We propose a fast randomized SDP approach specifically tailored for solving PPS. Our approach leverages randomization and entropic regularization to reduce both computational complexity and memory usage while ensuring robustness to nonuniform corruption and noise. The primary contributions of this work are as follows:

\begin{itemize}
    \item We introduce novel randomized algorithms for an entropy-regularized SDP relaxation of PPS, building on \cite{lindsey2023fastrandomizedentropicallyregularized} and achieving scalability for large-scale SfM datasets.

    \item We develop fast rounding procedures for recovering combinatorial solutions from the implicitly represented primal solution variable of the regularized SDP. These rounding procedures avoid the introduction of a separate computational bottleneck and can maintain cycle consistency in the recovered solution if desired.

    \item We provide theoretical justification for the effectiveness of entropic regularization in reducing computational complexity and enhancing solution accuracy.

    \item Extensive experiments on real-world datasets demonstrate state-of-the-art performance of our method in terms of speed, accuracy, and memory efficiency, outperforming previous methods such as MatchEIG \cite{MatchEIG} and MatchFAME \cite{MatchFAME}.
\end{itemize}

We also aim to highlight the PPS problem as a prototypical problem where randomized entropy-regularized semidefinite programming enjoys an asymptotic computational scaling advantage, relative to both low-rank SDP solvers~\cite{burer2003nonlinear,manopt,SketchyCGAL} and spectral methods. In order to explain this advantage we must introduce some further background. Note that our discussion assumes a constant or only logarithmically growing number of iterations for convergence to a fixed tolerance, so our comparison of computational complexity considers only the per-iteration costs. Such an assumption holds empirically in our experiments as well as the previous work \cite{lindsey2023fastrandomizedentropicallyregularized}. For more detailed theoretical analysis, we point to a companion paper by one of the authors~\cite{cailindsey2025}, in which we rigorously analyze the convergence theory of entropy-regularized SDP solvers, as well as the effect of regularization on solution accuracy.

Now the primal semidefinite variable $X$ in this application is an $NK \times NK$ matrix, where $N$ denotes the number of images and $K$ the number of keypoints per image (assumed constant in this discussion for simplicity). The rank of the primal variable induced by the exact combinatorial solution is $M$, the total number of unique keypoints in the universe. Meanwhile, we also form a sparse $NK \times NK$ cost matrix $Q$ which encodes the observed (possibly corrupted) correspondences that we are given. The number of these correspondences is $\mathrm{nnz} (Q)$. Note that in the case of full PS, we have $M=K$ and $\mathrm{nnz} (Q) = N^2 K$. Then in general, we view $\tilde{O}( \mathrm{nnz}(Q) )$ computational scaling as near-optimal, where the tilde conceals logarithmic factors.

An iteration of a low-rank SDP or spectral solver typically requires at least $O(M)$ matrix-vector multiplications by the cost matrix, as well as orthogonalization of a collection of $O(M)$ vectors of size $NK$, yielding a total computational complexity of at least $O( \mathrm{nnz}(Q) M + NK M^2)$ per iteration. In addition, there is a memory cost of $O(NKM)$ to represent the low-rank solution. For SDP solvers, there may be additional costs based on the structure of the constraints.

The motivation of entropy-regularized semidefinite programming \cite{lindsey2023fastrandomizedentropicallyregularized} is precisely to avoid dependence on the solution rank $O(M)$, which in this context is not expected to be small. We present two entropy-regularized SDP relaxations together with corresponding algorithms, which we refer to as the strong and weak SDP. For the strong SDP, which is a tighter relaxation, we achieve $\tilde{O} ( \mathrm{nnz}(Q) K + NK^3) $ computational cost, and the memory cost is $O(\mathrm{nnz}(Q) + NK^2)$. For the weak SDP, we achieve the optimal scaling $\tilde{O} ( \mathrm{nnz}(Q) )$ for both cost and memory. Note that in either case, there is no explicit dependence on the solution rank.

We also develop two rounding procedures for recovering a cycle-consistent collection of partial permutations from the implicitly represented primal variable $X$, which we can interact with efficiently only via matrix-vector multiplications. These `slow' and `fast' recovery procedures match the computational scalings of the strong and weak SDP solvers, respectively. In our practical experiments, we observe that the results furnished by the weak SDP with fast recovery are very similar to those furnished by the strong SDP with slow recovery, justifying the use of these faster algorithms in practice.

Finally we present a `masked' recovery procedure, which does not necessarily furnish a cycle-consistent solution, but instead classifies the observed correspondences as correct or incorrect. The cost of this simpler procedure is $\tilde{O} ( \mathrm{nnz} (Q) )$.

To conclude, we comment on a key advantage of our approaches over spectral methods, namely that the solution rank, i.e., the total number of unique keypoints, need not be specified \emph{a priori}, but can rather be recovered from the data via the recovery procedure.

\subsection{Outline}
In Section~\ref{sec:prelim} we provide background on the full and partial permutation synchronization problems. In Section~\ref{sec:entropic} we prove that entropy regularization disambiguates the degeneracy of standard SDP relaxations of the PPS in order to promote a single minimal solution. In Section~\ref{sec:solvers} we introduce fast randomized solvers for the strong and weak regularized SDP relaxations. In Section~\ref{sec:recovery} we present the `slow,' `fast,' and `masked' rounding procedures furnishing partial permutations from the dual solution of the regularized SDP. In Section~\ref{sec:experiments}, we present numerical experiments evidencing the effectiveness of our algorithms.

\subsection{Acknowledgments}
This material is based on work supported by the U.S. Department of
Energy, Office of Science, Accelerated Research in Quantum Computing
Centers, Quantum Utility through Advanced Computational Quantum Algorithms (M.L.),
grant no. DE-SC0025572 and by the Applied Mathematics Program of the
US Department of Energy (DOE) Office of Advanced Scientific Computing
Research under contract number DE-AC02-05CH11231 (M.L.). M.L. was also partially
supported by a Sloan Research Fellowship. Y.S. is supported by the startup fund from UC Davis Department of Mathematics.

\section{Preliminaries}\label{sec:prelim}

In this section, we first explain how SDP relaxations are formally
derived from the full permutation synchronization problem. We also
highlight the difficulty of identifying the right relaxation of the
partial permutation synchronization problem.

\subsection{Full permutation synchronization \label{sec:full}}

In the setting of full permutation synchronization, we suppose that
we are given permutation matrices $Q^{(i,j)}\in\R^{K\times K}$ for
$i,j=1,\ldots,N$, each encoding a permutation of $K$ elements. By
assumption $Q^{(i,i)}=\mathbf{I}_{K}$ and $Q^{(i,j)\top}=Q^{(j,i)}$
for all $i,j=1,\ldots,N$. The goal, roughly speaking, is to determine
permutations $P^{(i)},\ldots,P^{(N)}$, where we assume without loss
of generality that $P^{(1)}=\mathbf{I}_{K}$, such that $Q^{(i,j)}\approx P^{(i)}P^{(j)\top}$
for all $i,j=1,\ldots,N$.

Since this may not be possible to achieve exactly, one motivation
for typical semidefinite relaxations is to minimize the objective
\begin{equation}
\sum_{i,j=1}^{N}\Vert Q^{(i,j)}-P^{(i)}P^{(j)\top}\Vert_{\mathrm{F}}^{2}=\Vert Q-PP^{\top}\Vert_{\mathrm{F}}^{2}\label{eq:frobloss}
\end{equation}
 over all permutation matrices $P^{(2)},\ldots,P^{(N)}$, where we
fix $P^{(1)}=\mathbf{I}_{K}$, $Q\in\R^{NK\times NK}$ denotes the
matrix with suitable blocks $Q^{(i,j)}$, and we implicitly define
$P$ in terms of the $P^{(i)}$ via 
\begin{equation}
P=\left(\begin{array}{c}
P^{(1)}\\
P^{(2)}\\
\vdots\\
P^{(N)}
\end{array}\right).\label{eq:Pblock}
\end{equation}
Since $\Vert Q^{(i,j)}\Vert_{\mathrm{F}}^{2}=\Vert P^{(i)}P^{(j)\top}\Vert_{\mathrm{F}}^{2}=K$
for all $i,j$, it is equivalent to maximize the objective
\[
\Tr[QPP^{\top}]
\]
 over all $P$ of the form (\ref{eq:Pblock}) in which the blocks
$P^{(i)}\in\R^{K\times K}$ are permutation matrices.

We may relax this problem by replacing $PP^{\top}$ with $X\in\R^{NK\times NK}$,
enforcing certain conditions on $X$ that are necessarily satisfied
by $PP^{\top}$, specifically the conditions that $X\succeq0$ and
that the diagonal blocks of $X$ are known. The resulting semidefinite
relaxation reads as: 
\begin{align}
\underset{X\in\R^{NK\times NK}}{\text{maximize}}\ \  & \Tr[QX]\label{eq:strongSDP}\\
\text{subject to}\ \  & X^{(i,i)}=\mathbf{I}_{K},\quad i=1,\ldots,N,\nonumber \\
 & X\succeq0.\nonumber 
\end{align}
 In the first constraint, and henceforth, we use the superscript $(i,j)$
to denote suitable blocks. We will refer to this semidefinite relaxation
as the `strong SDP' in the context of this work.

We will also consider the following `weak SDP' relaxation, which can
be derived by further relaxing the constraints of (\ref{eq:strongSDP}):
\begin{align}
\underset{X\in\R^{NK\times NK}}{\text{maximize}}\ \  & \Tr[QX]\label{eq:weakSDP}\\
\text{subject to}\ \  & \mathrm{diag}(X)=\mathbf{1}_{NK},\nonumber \\
 & \Tr\left[X^{(i,i)}\frac{\mathbf{1}_{K}\mathbf{1}_{K}^{\top}}{K}\right]=1,\quad i=1,\ldots,N,\nonumber \\
 & X\succeq0.\nonumber 
\end{align}
 Here and below, we use the notation $\mathbf{1}_{n}\in\R^{n}$ to
denote the vector of all 1's. Note that the second constraint specifies
that the sum of all entries in each diagonal block $X^{(i,i)}$ is
$1$.

\subsection{Partial permutation synchronization \label{sec:partial}}

We will now discuss the more general and difficult case of partial
permutation synchronization.

In the general case, we now consider $N$ blocks of potentially different
sizes $K^{(1)},\ldots,K^{(N)}$, and we are given $Q^{(i,j)}\in\R^{K^{(i)}\times K^{(j)}}$
for $i,j=1,\ldots,N$, each of which is a \emph{partial permutation
matrix}, i.e., a principal submatrix of some permutation matrix. By
assumption, as above, we have $Q^{(i,i)}=\mathbf{I}_{K^{(i)}}$ and
$Q^{(i,j)\top}=Q^{(j,i)}$ for all $i,j$. It is useful to define
$L:=\sum_{i=1}^{N}K^{(i)}$, which will correspond to the size of
our large matrices.

The problem is to determine $M$, as well as some partial permutation
matrices $P^{(i)}\in\R^{K^{(i)}\times M}$, which satisfy $Q^{(i,j)}\approx P^{(i)}P^{(j)\top}$
as well as possible. In fact each $P^{(i)}$ should have exactly one
$1$ in each of its rows, i.e., $P^{(i)}$ should consist of a subset
of \emph{rows} of a full permutation matrix. We shall call such matrices
\emph{row-partial permutation matrices}. The setting of full permutation
synchronization, described above in Section \ref{sec:full}, is recovered
in the case where $K^{(i)}=K=M$ for all $i$.

It is useful to understand the problem intuitively in the language
of the important application of image registration. In this setting,
$N$ is the number of images. Within the $i$-th image there are $K^{(i)}$
pre-specified `keypoints.' The set of keypoints in each image can
be viewed as a subset of an underlying collection of `registry points'
(of size $M\geq K^{(i)}$). The partial permutations $Q^{(i,j)}$
indicate identifications, potentially corrupted, between keypoints
in the $i$-th image and keypoints in the $j$-th image. The goal
is to recover the size of the registry and to identify the correspondence
of the keypoints in each image to the registry points.

To approach this problem, we might try once again to begin with the
minimization of the same loss function (\ref{eq:frobloss}) over all
row-partial permutation matrices $P^{(i)}\in\R^{K^{(i)}\times M}$.

Unfortunately, if we attempt to follow the same derivation as above,
we encounter the difficulty that the quantity $\Vert P^{(i)}P^{(j)\top}\Vert_{\mathrm{F}}^{2}$,
which we previously discarded as an irrelevant constant, is not independent
of the choice of $P^{(1)},\ldots,P^{(N)}.$ 

As a first attempt to address this issue, one can observe that $\Vert P^{(i)}P^{(j)\top}\Vert_{\mathrm{F}}^{2}=\mathbf{1}_{K^{(i)}}^{\top}P^{(i)}P^{(j)\top}\mathbf{1}_{K^{(j)}}$,
because the Frobenius norm of a matrix whose entries are all in $\{0,1\}$
is the same as the sum of the entries. This identity suggests that
the inclusion of the additional linear term $\frac{1}{2}\mathbf{1}_{L}^{\top}X\mathbf{1}_{L}$
in the SDP objective, relative to (\ref{eq:strongSDP}), could account
for the missing term in the derivation.

However, such a penalty term ruins the interpretation of (\ref{eq:strongSDP})
of (\ref{eq:weakSDP}) as maximizing the `overlap' of $X$ with a
fixed target matrix $Q$, and in particular it encourages the primal
variable $X$ to assume negative entries. Without explicit enforcement
of the entrywise constraint $X\geq0$, this term undermines the quality
of the relaxation. Meanwhile, in practice, the enforcement of such
a dense inequality constraint on the PSD matrix $X$ is intractable
for large $L$.

Optimistically, we seek an optimization approach with optimal scaling
$O(\mathrm{nnz}(Q))=O(N^{2}K)$, where $K:=\max_{i=1,\ldots,N}K^{(i)}$,
and it will turn out, quite fortuitously, that our strategy for achieving
this scaling (namely, entropic regularization) \emph{also }addresses
the difficulty posed by the ambiguity of the derivation of an SDP
relaxation for the partial permutation problem.

To understand this ambiguity, let us examine what happens when we
simply consider SDP relaxations that are formally analogous to the
ones derived above in Section \ref{sec:full}. To wit, define the
matrix $Q=\left[Q^{(i,j)}\right]$ blockwise as above (with the caveat
that the blocks are not necessarily all of uniform size). Then our
strong SDP reads as: 
\begin{align}
\underset{X\in\R^{L\times L}}{\text{maximize}}\ \  & \Tr[QX]\label{eq:strongSDPgeneral}\\
\text{subject to}\ \  & X^{(i,i)}=\mathbf{I}_{K^{(i)}},\quad i=1,\ldots,N,\nonumber \\
 & X\succeq0,\nonumber 
\end{align}
 and our weak SDP as: 

\begin{align}
\underset{X\in\R^{L\times L}}{\text{maximize}}\ \  & \Tr[QX]\label{eq:weakSDPgeneral}\\
\text{subject to}\ \  & \mathrm{diag}(X)=\mathbf{1}_{L},\nonumber \\
 & \Tr\left[X^{(i,i)}\frac{\mathbf{1}_{K^{(i)}}\mathbf{1}_{K^{(i)}}^{\top}}{K^{(i)}}\right]=1,\quad i=1,\ldots,N,\nonumber \\
 & X\succeq0.\nonumber 
\end{align}
Again we use the superscript $(i,j)$ to denote suitable blocks, now
possibly of variable size.

The problem with both (\ref{eq:strongSDPgeneral}) and (\ref{eq:weakSDPgeneral})
can be understood in the case of uncorrupted data, i.e., where $Q^{(i,j)}=P^{(i)}P^{(j)\top}$
for ground truth row-partial permutations $P^{(i)}\in\R^{K^{(i)}\times M}$,
or writing 
\[
P=\left(\begin{array}{c}
P^{(1)}\\
\vdots\\
P^{(N)}
\end{array}\right)\in\R^{L\times M},
\]
 where 
\[
Q=PP^{\top}.
\]
 Note that for each such $P$, the $m$-th column of $P$ can be viewed
as a multi-hot encoding of the keypoints in each of the $N$ subsets
corresponding to the $m$-th registry point. Conversely, any $P\in\{0,1\}^{L\times M}$
defines a valid ground truth as long as:
\begin{enumerate}
\item each row contains exactly one 1, and 
\item each column contains at most a single 1 within each of its blocks.
\end{enumerate}
In this setting, the solutions of (\ref{eq:strongSDPgeneral}) and
(\ref{eq:weakSDPgeneral}) are \emph{not unique}. Besides the ground
truth solution specified by $X=PP^{\top}$ (or equivalently, $X^{(i,j)}=P^{(i)}P^{(j)\top}$),
there may be many alternative solutions of the form $\tilde{X}^{(i,j)}=\tilde{P}^{(i)}\tilde{P}^{(j)\top}$,
where $\tilde{P}^{(i)}\in\R^{K^{(i)}\times\tilde{M}}$ with $\tilde{M}<M$.
In the language of image registration, such a solution can be obtained
by `identifying two registry points as one, i.e., by replacing two
of the columns of $P$ with a single column which is their sum. As
long as those two registry points never appear as keypoints within
the same image, then properties (1) and (2) above will be satisfied
for the resulting $\tilde{P}$. Moreover, the nonzero pattern of $\tilde{X}=\tilde{P}\tilde{P}^{\top}\in\{0,1\}^{L\times L}$
will contain that of $PP^{\top}\in\{0,1\}^{L\times L}$, i.e., that
of $Q$, and we will have $\Tr[Q\tilde{X}]=\Tr[QX]$. Evidently, many
such $\tilde{P}$ can be constructed by merging subsets of points
that never appear in the same image, and moreover, any convex combinations
of the resulting discrete set of $\tilde{X}$ will also share the
same value while remaining feasible for (\ref{eq:strongSDPgeneral})
and (\ref{eq:weakSDPgeneral}).

We will address this difficulty in the next section.

Before proceeding, it is instructive to define an even weaker Goemans-Williamson-type~\cite{Goemans_Williamson_1995} (GW-type, for short) SDP relaxation, which removes all
constraints on the diagonal blocks of $X$ besides the constraint
that the diagonal entries themselves are all $1$:
\begin{align}
\underset{X\in\R^{L\times L}}{\text{maximize}}\ \  & \Tr[QX]\label{eq:weakestSDP}\\
\text{subject to}\ \  & \mathrm{diag}(X)=\mathbf{1}_{L},\nonumber \\
 & X\succeq0.\nonumber 
\end{align}
 Omitting these additional constraints can be viewed as completely
forgetting the fact that distinct keypoints in the same image cannot
correspond to the same registry point. Note that in particular, the
trivial solution $X=\mathbf{1}_{L}\mathbf{1}_{L}^{\top}$ is feasible
(and in fact optimal); this solution corresponds to the identification
of \emph{all} keypoints in all images with a single registry point
($M=1$).

All of the above relaxations coincide in the case where $K^{(i)}=1$
for all $i$. This setting describes the problem of registering an
unstructured collection of points, given only some (potentially corrupted)
identifications between pairs of points.

\section{The blessing of entropic regularization}\label{sec:entropic}

We saw in Section \ref{sec:partial} that, in addition to the desired
ground truth solution, any solution of (\ref{eq:strongSDPgeneral})
which `hallucinates' additional identifications between keypoints
in distinct images (beyond those indicated by the data $Q^{(i,j)}$)
will also be optimal.

In this section, we will explain how entropic regularization resolves
this ambiguity in favor of the ground truth solution, which, among
all these options, maximizes the number of points in the recovered
registry. In the next section (i.e., Section \ref{sec:solvers} below),
we will show how this regularization can \emph{also }be used to derive
fast optimizers. Our solver will represent our primal solution only
implicitly, so later, in Section \ref{sec:recovery}, we must explain
how we can efficiently query our primal solution to recover the partial
permutations that connect pairs of images, as well as the correspondences
of keypoints to an underlying registry that we construct.

Following~\cite{lindsey2023fastrandomizedentropicallyregularized}, we define the von Neumann entropy 
\[
S(X):=\Tr[X\log X]-\Tr[X]
\]
 on the domain of positive semidefinite matrices and consider the
following entropic regularization of (\ref{eq:strongSDPgeneral}): 

\begin{align}
\underset{X\in\R^{L\times L}}{\text{minimize}}\ \  & \Tr[CX]+\beta^{-1}S(X)\label{eq:strongSDPreg}\\
\text{subject to}\ \  & X^{(i,i)}=\mathbf{I}_{K^{(i)}},\quad i=1,\ldots,N,\nonumber \\
 & X\succeq0.\nonumber 
\end{align}
 Here $\beta>0$ is a regularization parameter, which can be interpreted
as an inverse temperature in the sense of quantum statistical mechanics,
and we define the cost matrix $C:=-Q$ to yield a minimization problem,
for consistency with~\cite{lindsey2023fastrandomizedentropicallyregularized}. When $\beta$ is very large, the
influence of regularization is reduced, but the optimization problem
is more difficult to solve.

Consider the case of uncorrupted data, i.e., the case where 
\begin{equation}
C=-PP^{\top},\quad P=\left(\begin{array}{c}
P^{(1)}\\
\vdots\\
P^{(N)}
\end{array}\right),\label{eq:uncorrupted}
\end{equation}
 in which the blocks $P^{(i)}\in\R^{K^{(i)}\times M}$ of $P$ are
row-partial permutation matrices. Remarkably, Theorems \ref{thm:reg}
and \ref{thm:reg2} suggest that in this case, the entropic regularization
resolves the ambiguity of the linear cost term $\Tr[CX]$, explained
in Section \ref{sec:partial}, in favor of the ground truth solution.
Concretely, as the regularization is removed (i.e., in the limit $\beta\ra\infty$),
the optimizer of (\ref{eq:strongSDP}) converges to the ground truth
$PP^{\top}$.

In fact, Theorems \ref{thm:reg} and \ref{thm:reg2} also apply to
the following entropic regularization of the weak SDP (\ref{eq:weakSDPgeneral}): 

\begin{align}
\underset{X\in\R^{L\times L}}{\text{minimize}}\ \  & \Tr[CX]+\beta^{-1}S(X)\label{eq:weakSDPreg}\\
\text{subject to}\ \  & \mathrm{diag}(X)=\mathbf{1}_{L},\nonumber \\
 & \Tr\left[X^{(i,i)}\frac{\mathbf{1}_{K^{(i)}}\mathbf{1}_{K^{(i)}}^{\top}}{K^{(i)}}\right]=1,\quad i=1,\ldots,N,\nonumber \\
 & X\succeq0,\nonumber 
\end{align}
 as well as the entropic regularization of the GW-type SDP (\ref{eq:weakestSDP}):
\begin{align}
\underset{X\in\R^{L\times L}}{\text{minimize}}\ \  & \Tr[CX]+\beta^{-1}S(X)\label{eq:weakestSDPreg}\\
\text{subject to}\ \  & \mathrm{diag}(X)=\mathbf{1}_{L},\nonumber \\
 & X\succeq0.\nonumber 
\end{align}

In our optimization framework, to be detailed in Section \ref{sec:solvers}
below, this last problem (\ref{eq:weakestSDPreg}) is no easier to
solve than (\ref{eq:weakSDPreg}), which already enjoys the near-optimal
practical scaling of $\tilde{O}(\text{nnz}(C))=\tilde{O}(N^{2}K)$.
(Here the tilde in $\tilde{O}$ indicates the omission of log factors.)
Meanwhile, the practical scaling for the strong regularized SDP (\ref{eq:strongSDPreg})
is $\tilde{O}(\text{nnz}(C)K+NK^{3})$. When this cost is managable,
we recommend (\ref{eq:strongSDPreg}). When this cost is not manageable,
we recommend the weak regularized SDP (\ref{eq:weakSDPreg}). The
GW-type regularized SDP (\ref{eq:weakestSDPreg}) is presented mostly
as a theoretical toy to emphasize the minimal sufficient condition
for exact recovery by entropic regularization. (Meanwhile, see~\cite{lindsey2023fastrandomizedentropicallyregularized}
for a numerical approach in this simplest case.) Since the scaling
is no better than that of (\ref{eq:weakSDPreg}), we see no reason
to recommend it in any case.
\begin{thm}
\label{thm:reg}Assume that the cost matrix $C=-Q$ is defined in
terms of uncorrupted data $Q=PP^{\top}$, following (\ref{eq:uncorrupted}),
in which the $P^{(i)}$ are row-partial permutations. Then $X^{\star}:=Q$
is an optimal solution of the unregularized SDPs (\ref{eq:strongSDP}),
(\ref{eq:weakSDP}), and (\ref{eq:weakestSDP}). Moreover, among all
optimal solutions $X$ for any of these problems, $X^{\star}$ minimizes
the von Neumann entropy $S(X)$.
\end{thm}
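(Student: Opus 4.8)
The plan is to treat the two claims separately, exploiting that the feasible sets of the three relaxations are nested: every $X$ feasible for the strong SDP (\ref{eq:strongSDPgeneral}) is feasible for the weak SDP (\ref{eq:weakSDPgeneral}), and every such $X$ is feasible for the GW-type SDP (\ref{eq:weakestSDP}). Since $X^{\star}=Q=PP^{\top}$ will be seen to be feasible for all three, it then suffices to prove optimality, and the entropy bound, with respect to the largest feasible set, that of (\ref{eq:weakestSDP}). Feasibility of $X^{\star}$ is a direct check: each block $P^{(i)}$ has exactly one $1$ per row and at most one $1$ per column, so $X^{\star(i,i)}=P^{(i)}P^{(i)\top}=\mathbf{I}_{K^{(i)}}$, which gives the constraints of (\ref{eq:strongSDPgeneral}) and hence (by the computations already recorded in Section~\ref{sec:partial}) those of (\ref{eq:weakSDPgeneral}) and (\ref{eq:weakestSDP}) as well.

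For optimality, let $S_{1},\ldots,S_{M}\subseteq\{1,\ldots,L\}$ be the partition induced by the registry assignment $i\mapsto m(i)$ (well-defined since each row of $P$ has a unique $1$), write $n_{m}:=|S_{m}|$, and note that the $m$-th column of $P$ is the indicator $\mathbf{1}_{S_{m}}$, so that $Q=\sum_{m=1}^{M}\mathbf{1}_{S_{m}}\mathbf{1}_{S_{m}}^{\top}$. For any $X\succeq0$ with $\mathrm{diag}(X)=\mathbf{1}_{L}$ we have the elementary bound $X_{ij}\le\sqrt{X_{ii}X_{jj}}=1$, hence
\[
\Tr[QX]\;=\;\sum_{m=1}^{M}\mathbf{1}_{S_{m}}^{\top}X\,\mathbf{1}_{S_{m}}\;=\;\sum_{m=1}^{M}\sum_{i,j\in S_{m}}X_{ij}\;\le\;\sum_{m=1}^{M}n_{m}^{2}.
\]
On the other hand $QP_{\cdot m}=n_{m}P_{\cdot m}$, so $\Tr[QX^{\star}]=\Tr[Q^{2}]=\sum_{m=1}^{M}n_{m}^{2}$, and $X^{\star}$ attains the bound. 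Thus $X^{\star}$ is optimal for (\ref{eq:weakestSDP}), and therefore also for (\ref{eq:weakSDPgeneral}) and (\ref{eq:strongSDPgeneral}).

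For the entropy statement, let $X$ be optimal for any one of the three relaxations. By the nesting of feasible sets and the fact that all three share the optimal value $\sum_{m}n_{m}^{2}$, $X$ is in particular optimal for (\ref{eq:weakestSDP}), so the displayed chain of inequalities is saturated: $\sum_{i,j\in S_{m}}X_{ij}=n_{m}^{2}$ for every $m$, which together with $X_{ij}\le1$ forces $X_{ij}=1$ for all $i,j$ lying in a common block $S_{m}$. Let $\mathcal{E}$ be the pinching that sends $X$ to the matrix agreeing with $X$ on entries $(i,j)$ with $m(i)=m(j)$ and vanishing elsewhere; the preceding sentence says precisely that $\mathcal{E}(X)=\sum_{m}\mathbf{1}_{S_{m}}\mathbf{1}_{S_{m}}^{\top}=Q=X^{\star}$. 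I would then invoke the standard pinching inequality: a pinching is trace-preserving and sends $X$ to a matrix whose eigenvalue vector is majorized by that of $X$, so that $\Tr[g(\mathcal{E}(X))]\le\Tr[g(X)]$ for every convex $g$. Applying this with $g(t)=t\log t$ (convex and continuous up to $t=0$, so the bound is valid for singular $X\succeq0$) and using $\Tr[\mathcal{E}(X)]=\Tr[X]$,
\[
S(X^{\star})=S(\mathcal{E}(X))=\Tr\!\big[g(\mathcal{E}(X))\big]-\Tr[\mathcal{E}(X)]\;\le\;\Tr[g(X)]-\Tr[X]=S(X),
\]
as desired.

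The routine parts are the feasibility check and the objective computation. The substantive step is the equality analysis: one has to notice that $Q$ is itself block-diagonal with respect to the registry partition $\{S_{m}\}$, so that $\mathcal{E}$ preserves the objective $\Tr[Q\,\cdot\,]$ and hence maps optimizers to optimizers, and that the only optimizer which is block-diagonal with respect to this partition is $X^{\star}$ (its diagonal blocks being pinned to $\mathbf{1}_{S_{m}}\mathbf{1}_{S_{m}}^{\top}$ by the tightness of the bound $X_{ij}\le1$). Granting this, the entropy comparison is immediate from the pinching/majorization inequality for $\Tr[X\log X]$; the only point requiring any care there is the behavior of $t\log t$ at $0$, which is benign. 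I expect identifying the pinching $\mathcal{E}$ and the fact that it fixes both $Q$ and every optimizer to be the main obstacle to discovering the argument.
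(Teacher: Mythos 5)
Your proposal is correct. The feasibility check, the bound $X_{ij}\le 1$ from the $2\times 2$ principal minors, and the characterization of optimizers as exactly those feasible $X$ agreeing with $Q$ on its nonzero pattern coincide with the paper's argument (the paper phrases the bound as $\Tr[QX]\le\mathbf{1}_{L}^{\top}Q\mathbf{1}_{L}$ rather than $\sum_{m}n_{m}^{2}$, but these are the same number). Where you genuinely diverge is the entropy-minimality step. The paper observes that every optimizer lies in the feasible set of the auxiliary problem of minimizing $S(X)$ over $X\succeq 0$ with the reindexed diagonal blocks pinned to $\mathbf{1}_{L_m}\mathbf{1}_{L_m}^{\top}$, and then invokes the Gibbs variational principle; since the constraints force a nontrivial null space, the paper must first quotient out the directions orthogonal to $\mathbf{1}_{L_m}$ within each block before the variational principle applies, after which the reduced optimizer is an explicit diagonal matrix. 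You instead note that the block pinching $\mathcal{E}$ sends every optimizer directly to $X^{\star}$ and apply the majorization/pinching inequality $\Tr[g(\mathcal{E}(X))]\le\Tr[g(X)]$ for convex $g$, with $g(t)=t\log t$. Your route is shorter and sidesteps the null-space bookkeeping entirely, since $t\log t$ extends continuously to $t=0$; what it does not deliver (and the theorem does not require) is the uniqueness of the entropy minimizer, which the paper's variational argument yields as a byproduct and which foreshadows the explicit block computation used in Theorem \ref{thm:reg2}. Both arguments are sound.
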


\begin{thm}
\label{thm:reg2}Assume that the cost matrix $C=-Q$ is defined in
terms of uncorrupted data $Q=PP^{\top}$, following (\ref{eq:uncorrupted}),
in which the $P^{(i)}$ are row-partial permutations. Let $X_{\beta}$
be the unique solution of the regularized SDP (\ref{eq:strongSDPreg})
with inverse temperature $\beta>0$. Then $\lim_{\beta\ra\infty}X_{\beta}=Q$.
The same result holds for the regularized SDPs (\ref{eq:weakSDPreg})
and (\ref{eq:weakestSDPreg}).
\end{thm}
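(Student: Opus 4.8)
The plan is to run the standard ``Tikhonov'' argument: the regularization path $\beta\mapsto X_\beta$ of a strictly convex perturbation of a convex program converges, as the perturbation is removed, to the \emph{unique} minimizer of the perturbing functional over the solution set of the unperturbed problem. Here the unperturbed problem is the corresponding unregularized SDP, its solution set is the optimal face $\mathcal F^\star$, the perturbing functional is $S$, and Theorem~\ref{thm:reg} will be used to identify $Q$ as the $S$-minimizer over $\mathcal F^\star$. This pins the limit down as $Q$.

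First I would record the topological facts. Let $\mathcal F$ denote the feasible set of whichever regularized SDP is in play --- \eqref{eq:strongSDPreg}, \eqref{eq:weakSDPreg}, or \eqref{eq:weakestSDPreg} --- which coincides with the feasible set of the corresponding unregularized SDP. In all three cases $\mathcal F$ is convex and compact: it is cut out by linear equalities together with $X\succeq0$, and the constraint $\mathrm{diag}(X)=\mathbf 1_L$ together with $X\succeq0$ forces $|X_{ij}|\le\sqrt{X_{ii}X_{jj}}=1$. The von Neumann entropy $S$ is continuous on the PSD cone (it equals $\Tr[g(X)]$ for the continuous function $g(t)=t\log t-t$ with the convention $g(0):=0$), hence bounded on $\mathcal F$, and $S$ is strictly convex on the PSD cone, a standard consequence of the strict convexity of $g$. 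Since $\Tr[CX]$ is linear, the regularized objective is strictly convex on the convex compact set $\mathcal F$, which re-derives the existence and uniqueness of $X_\beta$ asserted in the statement.

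Next I would extract from Theorem~\ref{thm:reg} exactly what is needed. Writing $f^\star:=\min_{X\in\mathcal F}\Tr[CX]=\Tr[CQ]$ and $\mathcal F^\star:=\{X\in\mathcal F:\Tr[CX]=f^\star\}$, Theorem~\ref{thm:reg} gives $Q\in\mathcal F^\star$ and $S(Q)\le S(X)$ for every $X\in\mathcal F^\star$. Because $\mathcal F^\star$ is convex (it is a face of $\mathcal F$) and $S$ is strictly convex, $S$ has a unique minimizer over $\mathcal F^\star$, and by the previous sentence that minimizer is $Q$. Upgrading Theorem~\ref{thm:reg} in this way --- from ``$Q$ attains the minimal entropy among optimizers'' to ``$Q$ is the unique minimal-entropy optimizer'' --- via strict convexity of $S$ is the one substantive step; the rest is a soft compactness argument.

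Then I would run the limit. Since $\{X_\beta\}_{\beta>0}\subset\mathcal F$ is precompact, it suffices to show every subsequential limit as $\beta\to\infty$ equals $Q$. Take $\beta_k\to\infty$ with $X_{\beta_k}\to\bar X$; then $\bar X\in\mathcal F$ by closedness. Testing the optimality of $X_{\beta_k}$ against the feasible competitor $Q$ yields
\[
\Tr[CX_{\beta_k}]+\beta_k^{-1}S(X_{\beta_k})\;\le\;\Tr[CQ]+\beta_k^{-1}S(Q).
\]
Boundedness of $S$ on $\mathcal F$ and $\beta_k\to\infty$ make both entropy terms vanish in the limit, so $\Tr[C\bar X]\le\Tr[CQ]=f^\star$; since $\bar X\in\mathcal F$, in fact $\Tr[C\bar X]=f^\star$, i.e.\ $\bar X\in\mathcal F^\star$. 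Subtracting instead $\Tr[CQ]=f^\star\le\Tr[CX_{\beta_k}]$ from the displayed inequality gives $S(X_{\beta_k})\le S(Q)$, hence by continuity $S(\bar X)\le S(Q)$; since $\bar X\in\mathcal F^\star$ and $Q$ is the unique $S$-minimizer over $\mathcal F^\star$, this forces $\bar X=Q$. Therefore $X_\beta\to Q$. The argument is identical for \eqref{eq:weakSDPreg} and \eqref{eq:weakestSDPreg}: only $\mathcal F$ and $\mathcal F^\star$ change, and Theorem~\ref{thm:reg} supplies the needed properties of $Q$ in each case. I expect the main obstacle to be precisely the strict convexity of $S$ used for uniqueness of the entropy minimizer; a subsidiary point requiring care is that $S$ must be treated as a continuous and bounded functional all the way to the boundary of the PSD cone, where eigenvalues of the relevant optimizers vanish (indeed $Q=PP^\top$ has rank $M<L$ in general).
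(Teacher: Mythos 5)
Your argument is correct, but it is genuinely different from the paper's proof. The paper proceeds constructively: after reindexing the keypoints by registry point, $Q$ becomes block-diagonal with all-ones blocks (\ref{eq:Qblockdiag}); a Gibbs-variational-principle argument shows that $X_\beta$ itself must be block-diagonal in this indexing, and each reindexed block is then computed \emph{in closed form} as $\tau_{\beta,m}\mathbf{I}_{L_m}+(1-\tau_{\beta,m})\mathbf{1}_{L_m}\mathbf{1}_{L_m}^{\top}$ with $\tau_{\beta,m}=L_m/(L_m+e^{\beta L_m}-1)\to0$, whence the limit. You instead run the generic Tikhonov/viscosity-selection argument: compactness and closedness of the feasible set, continuity and boundedness of $S$ up to the boundary of the PSD cone, the two-sided test of $X_{\beta_k}$ against $Q$ to show every subsequential limit lies in the optimal face with entropy at most $S(Q)$, and strict convexity of $S$ to upgrade Theorem~\ref{thm:reg} to uniqueness of the entropy minimizer on that face. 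Each step checks out (note the paper's own proof of Theorem~\ref{thm:reg} already establishes this uniqueness en route, so your ``one substantive step'' is consistent with what Theorem~\ref{thm:reg} actually delivers, even though its statement only asserts minimality). The trade-off: your route is shorter, softer, and would apply verbatim to any strictly convex, continuous regularizer, but it is purely qualitative; the paper's route yields the exact formula for $X_\beta$ at every finite $\beta$ (used in the Remark following the theorem) and, implicitly, an exponential rate of convergence, neither of which your argument recovers.
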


\begin{rem}
In fact we can compute $X_{\beta}$ explicitly, and it is exactly
the same in each of the cases (\ref{eq:strongSDPreg}), (\ref{eq:weakSDPreg}),
and (\ref{eq:weakestSDPreg}). We shall elaborate forthwith on the
structure of the solution, but see the full proof for the detailed
construction.
\end{rem}

The proofs of Theorems \ref{thm:reg} and \ref{thm:reg2} are given
in Appendices \ref{sec:thm1} and \ref{sec:thm2}, respectively. The
main idea in the proof of Theorem \ref{thm:reg} is that $Q$ can
be viewed as a block-diagonal matrix, if we reorder the $L=\sum_{i=1}^{N}K^{(i)}$
row/column indices of $Q$ indices, by registry point correspondence,
into $M$ blocks. Moreover, the entries of each diagonal block are
all 1's. It can be shown then that maximization of $\Tr[QX]$, subject
to $X\succeq0$ and $\mathrm{diag}(X)=\mathbf{1}_{L}$, is equivalent
to agreement with $Q$ on all diagonal blocks, after reindexing. The
Gibbs variational principle~\cite{Dissertation} then establishes that the entropy-optimal
$X$ satisfying this block-diagonal constraint is itself block-diagonal.

We also sketch the proof of Theorem \ref{thm:reg2}, which makes use
of the same reindexing that block-diagonalizes $Q$. In fact, the
Gibbs variational principle can be used to show that each optimizer
$X_{\beta}$ must be block-diagonal after this reindexing. This allows
us to explicitly solve for the reindexed diagonal blocks, which are
each of the form
\[
\tau_{\beta}\mathbf{I}+(1-\tau_{\beta})\mathbf{1}\mathbf{1}^{\top},
\]
 where $\tau_{\beta}\in(0,1)$, i.e., a convex combination of the
identity and the matrix of all ones (of suitable sizes). The coefficient
$\tau_{\beta}$ can be computed explicitly and satisfies $\tau_{\beta}\ra0$
as $\beta\ra\infty$, and this implies that $X_{\beta}\ra Q$ as $\beta\ra\infty$.

\section{Fast randomized solvers \label{sec:solvers}}

In this section we will explain how we can solve our SDP relaxations
efficiently by adapting the fast randomized entropically regularized
approach of~\cite{lindsey2023fastrandomizedentropicallyregularized}. Since our SDP relaxations for the partial
permutation synchronization problems generalize our relaxations for
the full synchronization problem, we will consider the more general
setting, as outlined in Section \ref{sec:partial}. We will outline
algorithms for solving the problems (\ref{eq:strongSDPreg}) and (\ref{eq:weakSDPreg}),
the strong and weak regularized SDP, respectively.

\subsection{Strong regularized SDP}

The dual problem to (\ref{eq:strongSDPreg}) can be derived as 
\[
\underset{\Lambda^{(i)}\in\R^{K^{(i)}\times K^{(i)}},\ i=1,\ldots,N}{\text{maximize}}\ \ \mathcal{F}[\Lambda^{(1)},\ldots,\Lambda^{(N)}],
\]
 where 
\begin{equation}
\mathcal{F}[\Lambda^{(1)},\ldots,\Lambda^{(N)}]:=\sum_{i=1}^{N}\Tr[\Lambda^{(i)}]-\beta^{-1}\Tr\left[e^{-\beta\left(C-\bigoplus_{i=1}^{N}\Lambda^{(i)}\right)}\right].\label{eq:strongDualObj}
\end{equation}
 Here the direct sum $\bigoplus$ refers to the construction of a
block-diagonal matrix from the summands. The primal solution $X$
can be recovered from the dual solution as 
\begin{equation}
X_{\beta,\boldsymbol{\Lambda}}:=e^{-\beta\left(C-\bigoplus_{i=1}^{N}\Lambda^{(i)}\right)}.\label{eq:XbetaLambda}
\end{equation}
 For notational brevity, we will collect the dual variables as 
\[
\boldsymbol{\Lambda}:=[\Lambda^{(1)},\ldots,\Lambda^{(N)}]
\]
 and identify 
\[
\mathcal{F}[\boldsymbol{\Lambda}]=\mathcal{F}[\Lambda^{(1)},\ldots,\Lambda^{(N)}].
\]

\subsubsection{Optimizer}

In order to maximize the dual objective (\ref{eq:strongDualObj}),
we pursue a generalization of the `noncommutative matrix scaling'
approach advanced in~\cite{lindsey2023fastrandomizedentropicallyregularized} (for Goemans-Williamson-type relaxations
with diagonal constraint) to the case of a block-diagonal constraint.
This generalization can be derived by generalizing the derivation
of the matrix scaling approach as a minorization-based optimizer.

To-wit, suppose that $\Lambda_{0}^{(i)}$, $i=1,\ldots,N$, form our
current guess for the dual solution, and we want to solve for the
update increment $\Delta\Lambda^{(i)}:=\Lambda^{(i)}-\Lambda_{0}^{(i)}$,
$i=1,\ldots,N$. It is convenient to define 
\begin{equation}
X_{0}:=e^{-\beta\left(C-\bigoplus_{i=1}^{N}\Lambda_{0}^{(i)}\right)},\label{eq:X0}
\end{equation}
 the primal guess associated to our dual guess.

We will update the dual variables by the dual update rule:
\begin{equation}
\Delta\Lambda^{(i)}=-\beta^{-1}\log\left(X_{0}^{(i,i)}\right),\quad i=1,\ldots,N.\label{eq:dualupdate}
\end{equation}
 Evidently $\Delta\Lambda^{(i)}=0$ for all $i$ if and only if $[X_{0}]^{(i,i)}=I_{K^{(i)}}$
for all $i$, i.e., if and only if the dual optimality condition holds.
Therefore the update rule can be viewed as a legitimate fixed-point
iteration for the dual variables, with unique fixed point corresponding
to the unique optimizer. However, we can interpret this fixed-point
iteration from an optimization point of view as a `Jacobi-style' update
for a concave minorization of the dual objective, as we shall describe
in the next section.

Though the point of view of solving for the increment is useful for
the interpretation of the approach, we remark before moving on that
the fixed-point iteration can more straightforwardly be presented
as 
\begin{align}
 & X_{0}\leftarrow X_{\beta,\boldsymbol{\Lambda}}\label{eq:dualupdate2}\\
 & \Lambda^{(i)}\leftarrow\Lambda^{(i)}-\beta^{-1}\log\left(X_{0}^{(i,i)}\right),\quad i=1,\ldots,N.\nonumber 
\end{align}
 Later on (cf. Algorithm \ref{alg:strong} below), we will present
the algorithm more formally and include a slight extension allowing
for damping of the step size.

\subsubsection{Interpretation via minorization \label{sec:minorization}}

For each $i$, let $\{u_{k}^{(i)}\}_{k=1}^{K^{(i)}}$ denote an orthonormal
collection of eigenvectors for $X_{0}^{(i,i)}$. Then consider the
proxy objective: 
\begin{equation}
\mathcal{F}_{0}[\boldsymbol{\alpha}]=\sum_{i=1}^{N}\Tr[\Lambda_{0}^{(i)}]+\sum_{i=1}^{N}\Tr\left[\sum_{k=1}^{K^{(i)}}\alpha_{k}^{(i)}u_{k}^{(i)}u_{k}^{(i)\top}\right]-\beta^{-1}\sum_{i=1}^{N}\Tr\left[e^{\beta\sum_{k=1}^{K^{(i)}}\alpha_{k}^{(i)}u_{k}^{(i)}u_{k}^{(i)\top}}X_{0}^{(i,i)}\right],\label{eq:Fproxy}
\end{equation}
 where $\boldsymbol{\alpha}=[\alpha^{(1)},\ldots,\alpha^{(N)}]$ consists
of columns $\alpha^{(i)}\in\R^{K^{(i)}}$.

In fact, $\mathcal{F}_{0}[\boldsymbol{\alpha}]$ can be viewed as
a minorization of $\mathcal{F}[\boldsymbol{\Lambda}]$ under the correspondence
\begin{equation}
\Lambda^{(i)}=\Lambda_{0}^{(i)}+\underbrace{\sum_{k=1}^{K^{(i)}}\alpha_{k}^{(i)}u_{k}^{(i)}u_{k}^{(i)\top}}_{\Delta\Lambda^{(i)}}.\label{eq:alphaLambda}
\end{equation}
 To see this point, observe that by the Golden-Thompson inequality,
\begin{align*}
\Tr\left[e^{-\beta\left(C-\bigoplus_{i=1}^{N}\Lambda^{(i)}\right)}\right] & \leq\Tr\left[e^{\beta\bigoplus_{i=1}^{N}\Delta\Lambda^{(i)}}e^{-\beta\left(C-\bigoplus_{i=1}^{N}\Lambda_{0}^{(i)}\right)}\right]\\
 & =\sum_{i=1}^{N}\Tr\left[e^{\beta[\Delta\Lambda^{(i)}]}X_{0}^{(i,i)}\right].
\end{align*}
Substituting $\Delta\Lambda^{(i)}=\sum_{k=1}^{K^{(i)}}\alpha_{k}^{(i)}u_{k}^{(i)}u_{k}^{(i)\top}$,we
obtain our proxy objective (\ref{eq:Fproxy}).

Now consider the optimizing the variable $\alpha_{k}^{(i)}$, holding
all other $\alpha_{l}^{(j)}=0$ fixed at zero. It is equivalent to
maximize 
\[
\alpha_{k}^{(i)}-\beta^{-1}\Tr\left[e^{\beta\alpha_{k}^{(i)}u_{k}^{(i)}u_{k}^{(i)\top}}X_{0}^{(i,i)}\right].
\]
 Compute the rank-one exponential $e^{\beta\alpha_{k}^{(i)}u_{k}^{(i)}u_{k}^{(i)\top}}=I_{K^{(i)}}+(e^{\beta\alpha_{k}^{(i)}}-1)u_{k}^{(i)}u_{k}^{(i)\top}$,
from which it follows that it is equivalent in turn to maximize the
(evidently concave) objective 
\[
\alpha_{k}^{(i)}-\beta^{-1}e^{\beta\alpha_{k}^{(i)}}u_{k}^{(i)\top}X_{0}^{(i,i)}u_{k}^{(i)}
\]
 with respect to $\alpha_{k}^{(i)}$. The first-order optimality condition
then yields 
\[
\alpha_{k}^{(i)}=-\beta^{-1}\log\left(u_{k}^{(i)\top}X_{0}^{(i,i)}u_{k}^{(i)}\right).
\]

Hence if we compute the coordinate ascent update for each $\alpha_{k}^{(i)}$
independently (holding all other variables fixed at zero) and then
accept all these updates simultaneously (i.e., `Jacobi style'), we
precisely recover the dual update (\ref{eq:dualupdate}) formula specified
above.

By contrast, a plain coordinate ascent approach derived by substituting,
e.g., a random unit vector $u^{(i)}$ in the role $u_{k}^{(i)}$ in
the above, could guarantee that the dual objective is increasing (while
still performing simultaneous updates in the direction of $u^{(i)}u^{(i)\top}$
over all $i=1,\ldots,N$). However, the Jacobi-style approach is more
convenient since the full-rank matrix update $\Delta\Lambda^{(i)}$
can be computed more straightforwardly with the same batched matrix-vector
multiplications by the effective cost matrix $C-\bigoplus_{i=1}^{N}\Lambda_{0}^{(i)}$.
Moreover, we find that the Jacobi-style approach is efficient in practice.

\subsubsection{Trace estimation \label{sec:trace}}

In order to implement the dual update (\ref{eq:dualupdate2}) in practice,
we must estimate the diagonal blocks $X_{\beta,\boldsymbol{\Lambda}}^{(i,i)}$
of the matrix $X_{\beta,\boldsymbol{\Lambda}}$ defined by (\ref{eq:XbetaLambda}).
Unfortunately, direct evaluation of the entire matrix exponential
in (\ref{eq:XbetaLambda}) is far too costly, costing $\Omega(K^{3}N^{3})$
operations in general, where $K:=\max_{i=1,\ldots,N}K^{(i)}$.

However, using randomized trace estimation, relying only on matrix-vector
multiplications (matvecs) by the effective cost matrix 
\[
C_{\mathrm{eff}}[\Blam]:=C-\bigoplus_{i=1}^{N}\Lambda^{(i)},
\]
 we can construct estimates of these diagonal blocks much more efficiently
than the entire matrix itself. Note that by exploiting the sparsity
pattern of $C_{\mathrm{eff}}$, we can perform each such matvec in
$O(KN^{2}+NK^{2})$ operations. Ignoring log factors, it will require
$\tilde{O}(K)$ matvecs to recover the full block diagonal to fixed
relative spectral accuracy with high probability, and therefore the
full construction of the block diagonal will cost $\tilde{O}(K^{2}N^{2}+NK^{3})$
operations, where $\tilde{O}$ indicates the omission of log factors.

To wit, let $Z\in\R^{L\times S}$ be a matrix with i.i.d. standard
Gaussian entries. Then we can `approximate' $X_{\beta,\Blam}$ with
the estimator $\hat{X}_{\beta,\Blam}$ obtained by inserting a stochastic
resolution of identity $\frac{1}{S}ZZ^{\top}$ as follows:
\[
\hat{X}_{\beta,\Blam}=X_{\beta,\Blam}^{1/2}ZZ^{\top}X_{\beta,\Blam}^{1/2}.
\]

We never form $\hat{X}_{\beta,\Blam}$ directly but instead form its
diagonal blocks $\hat{X}_{\beta,\Blam}^{(i,i)}$ as follows. First
form: 

\[
W=X_{\beta,\Blam}^{1/2}Z\in\R^{L\times S},
\]
 requiring $S$ matvecs by 
\[
X_{\beta,\Blam}^{1/2}=e^{-\frac{\beta}{2}C_{\mathrm{eff}}[\Blam]}.
\]
 The implementation of these matvecs can be reduced to matvecs by
the sparse matrix $C_{\mathrm{eff}}$ itself, using either the algorithm
of~\cite{expmv} or Chebyshev expansion~\cite{trefethen2019approximation}. For
fixed $\beta$ , the number of matvecs by $C_{\mathrm{eff}}$ required
per matvec by $X_{\beta,\Blam}^{1/2}$ is a constant, independent
of problem size. The dependence on $\beta$ is difficult to understand
in the algorithm~\cite{expmv}, though a naive approach
based on Trotter splitting~\cite{Trotter1959}, suggests that $O(\beta)$ is
easily attainable. Meanwhile, in Chebyshev expansion, the number grows
as $O(\sqrt{\beta})$ as can be verified by standard Chebyshev approximation
bounds~\cite{trefethen2019approximation}.

Once $W$ is formed, we can construct our block-diagonal estimates
as 
\[
\hat{X}_{\beta,\Blam}^{(i,i)}=\frac{1}{S}W^{(i)}W^{(i)\top},
\]
 where $W^{(i)}\in\R^{K^{(i)}\times S}$ is the $i$-th block of $W$.

The total complexity of the construction is therefore $O(KN^{2}S+NK^{2}S)$
for fixed $\beta$. The following concentration bound guarantees that
taking $S=O(K\log K)$ is sufficient for a good relative spectral
approximation of $X_{\beta,\Blam}^{(i,i)}$ with high probability,
which is what we need to guarantee a suitable error bound on our estimate
$\log\hat{X}_{\beta,\Blam}^{(i,i)}$ of $\log X_{\beta,\Blam}^{(i,i)}$,
justifying its use within the optimizer.
\begin{prop}
\label{prop:concentration}Let $\ve,\delta\in(0,1)$. With notation
as in the preceding discussion, there exists a constant $C>0$ such
that if $S\geq\frac{C\,\log(N/\delta)}{\ve^{2}}\,K\log K$, then 

\[
(1-\ve)\,X_{\beta,\Blam}^{(i,i)}\preceq\hat{X}_{\beta,\Blam}^{(i,i)}\preceq(1+\ve)\,X_{\beta,\Blam}^{(i,i)}
\]
 with probability at least $\delta$. It follows that 
\[
\Vert\log\hat{X}_{\beta,\Blam}^{(i,i)}-\log X_{\beta,\Blam}^{(i,i)}\Vert\leq\vert\log(1-\ve)\vert,
\]
 where $\Vert\,\cdot\,\Vert$ denotes the spectral norm.
\end{prop}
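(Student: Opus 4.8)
The plan is to establish the two-sided sandwich via a standard matrix-Chernoff / intrinsic-dimension argument applied blockwise, and then derive the logarithm bound as an easy corollary using operator monotonicity of the logarithm. First I would fix an index $i$ and analyze the single block. Write $A := X_{\beta,\Blam}^{1/2}$, so that the $i$-th block of $W = AZ$ is $W^{(i)} = E_i A Z$, where $E_i \in \R^{K^{(i)} \times L}$ is the coordinate projection onto the rows of block $i$. Then $\hat{X}_{\beta,\Blam}^{(i,i)} = \frac{1}{S} (E_i A Z)(E_i A Z)^\top = \frac{1}{S}\sum_{s=1}^{S} (E_i A z_s)(E_i A z_s)^\top$, a sum of $S$ i.i.d. rank-one PSD matrices whose common expectation is $E_i A A^\top E_i^\top = E_i X_{\beta,\Blam} E_i^\top = X_{\beta,\Blam}^{(i,i)} =: G_i$. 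So the claim $(1-\ve) G_i \preceq \hat{X}_{\beta,\Blam}^{(i,i)} \preceq (1+\ve) G_i$ is exactly the statement that the empirical second-moment matrix of the Gaussian vectors $g_s := E_i A z_s$ (which are $\mathcal{N}(0, G_i)$) concentrates multiplicatively around $G_i$.

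Next I would reduce to the isotropic case and invoke a Gaussian covariance-estimation bound. Whitening by $G_i^{-1/2}$ (on the range of $G_i$; work on that subspace, of dimension $r_i := \mathrm{rank}(G_i) \le K^{(i)} \le K$), the condition becomes $(1-\ve) I \preceq \frac{1}{S}\sum_s h_s h_s^\top \preceq (1+\ve) I$ for $h_s \sim \mathcal{N}(0, I_{r_i})$ i.i.d. This is the classical sample-covariance concentration for Gaussians: by the matrix Chernoff / Bernstein inequality (or the non-asymptotic bound of Vershynin, or Koltchinskii–Lounici), there is an absolute constant $C_0$ such that $S \ge C_0 \ve^{-2} (r_i + \log(1/\delta_i))$ suffices to fail with probability at most $\delta_i$. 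Since $r_i \le K$, the bound $S \ge C_0 \ve^{-2}(K + \log(1/\delta_i))$ works uniformly in $i$; absorbing constants and using $K + \log(1/\delta_i) \le C_1 K \log K \cdot \log(1/\delta_i)$ type slack (to match the stated $K \log K$ factor, which is more generous than needed) gives the claimed sample size. Then I would union-bound over the $N$ blocks: taking $\delta_i = \delta/N$ for each $i$ replaces $\log(1/\delta_i)$ by $\log(N/\delta)$, yielding the stated threshold $S \ge C \ve^{-2} K \log K \cdot \log(N/\delta)$ and failure probability at most $\delta$ across all blocks simultaneously — i.e. the sandwich holds for every $i$ with probability at least $1-\delta$. (I note the proposition as stated says "with probability at least $\delta$," which I take to be a typo for $1-\delta$; the proof gives the latter.)

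For the second display, condition on the event that the sandwich holds. Operator monotonicity of $t \mapsto \log t$ on $(0,\infty)$ applied to $(1-\ve) G_i \preceq \hat{X}_{\beta,\Blam}^{(i,i)} \preceq (1+\ve) G_i$ gives $\log G_i + \log(1-\ve) I \preceq \log \hat{X}_{\beta,\Blam}^{(i,i)} \preceq \log G_i + \log(1+\ve) I$, hence $-|\log(1-\ve)| I \preceq \log\hat{X}_{\beta,\Blam}^{(i,i)} - \log G_i \preceq \log(1+\ve) I \preceq |\log(1-\ve)| I$ (using $\log(1+\ve) \le |\log(1-\ve)|$ for $\ve \in (0,1)$), which gives the spectral-norm bound. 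One technical point to address: $G_i = X_{\beta,\Blam}^{(i,i)}$ is strictly positive definite because $X_{\beta,\Blam} = e^{-\beta C_{\mathrm{eff}}[\Blam]}$ is, and any principal submatrix of a positive definite matrix is positive definite — so $\log G_i$ is well-defined, and on the complementary event nothing is claimed.

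The main obstacle is not conceptual but bookkeeping: getting the right dependence on the intrinsic dimension of the blocks. A crude union bound over an $\ve$-net of the sphere in $\R^{K^{(i)}}$ gives $S \gtrsim \ve^{-2} K$ directly, which already suffices; the extra $\log K$ in the stated bound is harmless slack, so I would simply cite an off-the-shelf Gaussian covariance-estimation theorem rather than re-derive the net argument. The only thing that genuinely requires care is ensuring the constant $C$ is truly absolute — independent of $\beta$, $N$, $K$, and the $\Blam$ — which it is, since after whitening the problem is dimension-free in the ambient space $L$ and depends only on the rank $r_i \le K$.
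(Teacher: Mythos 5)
Your proposal is correct and follows essentially the same route as the paper: both identify $\hat{X}_{\beta,\Blam}^{(i,i)}$ as the Gram matrix of a Gaussian sketch of the block column of $X_{\beta,\Blam}^{1/2}$, invoke a standard subspace-embedding/covariance-concentration bound (the paper cites the Johnson--Lindenstrauss subspace embedding of Martinsson--Tropp, which is the same result you reach via whitening and Gaussian sample-covariance estimation), union-bound over the $N$ blocks, and conclude via operator monotonicity of the logarithm together with $\log(1+\ve)\leq\vert\log(1-\ve)\vert$. You are also right that ``probability at least $\delta$'' is a typo for $1-\delta$ and that the $\log K$ factor is slack.
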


\begin{rem}
If we assume that $\ve\le\frac{1}{2}$, for example, then $\vert\log(1-\ve)\vert\leq1.4\times\ve=O(\ve)$.
Therefore, with $S=\tilde{O}(K/\ve^{2})$ random vectors, we attain
$O(\ve)$ spectral error for the estimated block logarithms.
\end{rem}

\begin{proof}
For any fixed $i$, note that $\hat{X}_{\beta,\Blam}^{(i,i)}=(Z^{\top}A)^{\top}(Z^{\top}A)$,
where $A:=\left[X_{\beta,\Blam}^{1/2}\right]^{(:,i)}$. Then $\tilde{A}:=Z^{\top}A$
is a Johnson-Lindenstrauss random projection of $A$, and standard
results~\cite{Martinsson_Tropp_2020} guarantee that this random projection is a subspace
embedding, i.e., 
\[
(1-\ve)\,A^{\top}A\preceq\tilde{A}^{\top}\tilde{A}\preceq(1+\ve)\,A^{\top}A
\]
 with probability at least $\delta$, provided that $S\geq\frac{C\,\log(1/\delta)}{\ve^{2}}\,K\log K$
for a suitable constant $C$. Since we want this property to hold
for, we substitute $\delta\leftarrow\frac{\delta}{N}$ and apply the
union bound over $i=1,\ldots,N$ to obtain the desired result.

Then by the matrix-monotonicity of the logarithm~\cite{Horn_Johnson_1991}, it follows
that 
\[
\log(1-\ve)\,\mathbf{I}_{K^{(i)}}+\log X_{\beta,\Blam}^{(i,i)}\preceq\log\hat{X}_{\beta,\Blam}^{(i,i)}\preceq\log(1+\ve)\,\mathbf{I}_{K^{(i)}}+\log X_{\beta,\Blam}^{(i,i)}.
\]
 Since $\vert\log(1+\ve)\vert\leq\vert\log(1-\ve)\vert$, the second
statement of the proposition follows.
\end{proof}

\subsubsection{Algorithm summary}

The algorithm outlined above for solving the strong regularized SDP
(\ref{eq:strongSDPreg}) is summarized in Algorithm \ref{alg:strong}.
Observe that we include a damping parameter $\gamma>0$ which controls
a schedule for tapering off the step size, and in practice we find
that $\gamma=5$ works quite well. Note that $\gamma=+\infty$ recovers
the optimizer with no damping.

\begin{algorithm}
\caption{Optimizer for strong regularized SDP (\ref{eq:strongSDPreg}) }

\begin{algorithmic}[1]
\Require{Cost matrix $C=-Q$, regularization parameter $\beta > 0$, randomization parameter $S$, damping parameter $\gamma > 0$}
\State{Initialize $\Lambda^{(i)}=0_{K^{(i)}\times K^{(i)}}$ for $i=1,\ldots,N$, t=0}
\While{$\Blam=(\Lambda^{(1)},\ldots,\Lambda^{(N)})$ not converged}
\State{Set $t \leftarrow t+1$ and $\eta \leftarrow \min(\gamma / t,1)$}
\State{Let $Z\in\R^{L\times S}$ have i.i.d. standard Gaussian entries}
\State{Form $C_{\mathrm{eff}}[\Blam]:=C-\bigoplus_{i=1}^{N}\Lambda^{(i)}$ as an $L\times L$ sparse matrix}
\State{Form $W=e^{-\frac{\beta}{2}C_{\mathrm{eff}}[\Blam]}Z\in\R^{L\times S}$ using the algorithm of~\cite{expmv} or Chebyshev expansion}
\State{Form $B^{(i)}=\frac{1}{S}W^{(i)}W^{(i)\top}\in\R^{K^{(i)}\times K^{(i)}}$ for $i=1,\ldots,N$}
\State{Set $\Lambda^{(i)}\leftarrow\Lambda^{(i)}-\eta \beta^{-1}\log\left(B^{(i)}\right)$ for $i=1,\ldots,N$}
\EndWhile
\end{algorithmic}\label{alg:strong}
\end{algorithm}

By Proposition \ref{prop:concentration} we can guarantee $\ve$-error
in the spectral norm of our updates for each $\Lambda^{(i)}$ with
$S=\tilde{O}(K/\ve^{2})$. In this sense the total cost (for fixed
$\beta$) of an $\ve$-accurate iteration is $\tilde{O}((N^{2}K^{2}+NK^{3})/\ve^{2})$.
Elsewhere in the manuscript we think of $\ve$ as constant in our
scaling considerations.

\subsection{Weak regularized SDP}

The dual problem to (\ref{eq:weakSDPreg}) can be derived as 
\[
\underset{\lambda\in\R^{L},\,\mu\in\R^{N}}{\text{maximize}}\ \ \mathcal{F}[\lambda,\mu],
\]
 where 
\[
\mathcal{F}[\lambda,\mu]=\mathbf{1}_{L}\cdot\lambda+\mathbf{1}_{N}\cdot\mu-\beta^{-1}\Tr\left[e^{-\beta\left(C-\mathrm{diag}(\lambda)-\bigoplus_{i=1}^{N}\mu^{(i)}\Sigma^{(i)}\right)}\right],
\]
 where we have defined 
\[
\Sigma^{(i)}:=\frac{\mathbf{1}_{K^{(i)}}\mathbf{1}_{K^{(i)}}^{\top}}{K^{(i)}}.
\]

The primal solution can be recovered as 
\[
X_{\beta,\lambda,\mu}:=e^{-\beta\left(C-\mathrm{diag}(\lambda)-\bigoplus_{i=1}^{N}\mu^{(i)}\Sigma^{(i)}\right)}.
\]

\subsubsection{Optimizer }

Our optimizer is defined by the update: 
\begin{align}
 & X_{0}\leftarrow X_{\beta,\lambda,\mu}\label{eq:dualupdateweak}\\
 & \lambda\leftarrow\lambda-\beta^{-1}\log\left(\mathrm{diag}[X_{0}]\right)\nonumber \\
 & \mu^{(i)}\leftarrow\mu^{(i)}-\beta^{-1}\log\left(\frac{\mathbf{1}_{K^{(i)}}^{\top}X_{0}^{(i,i)}\mathbf{1}_{K^{(i)}}}{K^{(i)}}\right),\quad i=1,\ldots,N.\nonumber 
\end{align}
 Later on (cf. Algorithm \ref{alg:weak} below), we will present the
algorithm more formally and include a slight extension allowing for
damping of the step size.

Given current guess $\lambda_{0},\mu_{0}$, the first step can be
viewed as an exact update of $\lambda$ for the minorized objective
(cf. \cite{lindsey2023fastrandomizedentropicallyregularized}): 
\[
\mathcal{F}_{0}[\lambda]=\mathbf{1}_{L}\cdot\lambda+\mathbf{1}_{N}\cdot\mu_{0}-\beta^{-1}\Tr\left[e^{\beta(\lambda-\lambda_{0})}e^{-\beta\left(C-\mathrm{diag}(\lambda_{0})-\bigoplus_{i=1}^{N}\mu_{0}^{(i)}\Sigma^{(i)}\right)}\right].
\]
 We have left the right-hand side unsimplified to illustrate the application
of the Golden-Thompson inequality.

Meanwhile, the second step can be viewed as an exact update of $\mu$
for the the minorized objective (cf. the derivation in Section \ref{sec:minorization})
: 
\[
\mathcal{G}_{0}[\mu]=\mathbf{1}_{L}\cdot\lambda_{0}+\mathbf{1}_{N}\cdot\mu-\beta^{-1}\Tr\left[e^{\beta\bigoplus_{i=1}^{N}\left(\mu^{(i)}-\mu_{0}^{(i)}\right)\Sigma^{(i)}}e^{-\beta\left(C-\mathrm{diag}(\lambda_{0})-\bigoplus_{i=1}^{N}\mu_{0}^{(i)}\Sigma^{(i)}\right)}\right].
\]

Thus either the second or third line individually of (\ref{eq:dualupdateweak})
must increase the dual objective. Instead of alternating these updates,
we compute them `Jacobi-style' in terms of the same matrix $X_{0}$,
in order to economize on matvecs in the randomized trace estimation
required for these updates.

\subsubsection{Trace estimation}

In order to implement (\ref{eq:dualupdateweak}) we must devise estimators
for $\mathrm{diag}[X_{\beta,\lambda,\mu}]$, as well as $\frac{\mathbf{1}_{K^{(i)}}^{\top}X_{\beta,\lambda,\mu}^{(i,i)}\mathbf{1}_{K^{(i)}}}{K^{(i)}}$,
for $i=1,\ldots,N$.

Again we define
\[
\hat{X}_{\beta,\lambda,\mu}:=\frac{1}{S}\left[X_{\beta,\lambda,\mu}^{1/2}ZZ^{\top}X_{\beta,\lambda,\mu}^{1/2}\right],
\]
 where $Z\in\R^{L\times S}$ has i.i.d. Gaussian entries.

Then we simply define our estimators as $\mathrm{diag}[\hat{X}_{\beta,\lambda,\mu}]$
and $\frac{\mathbf{1}_{K^{(i)}}^{\top}\hat{X}_{\beta,\lambda,\mu}^{(i,i)}\mathbf{1}_{K^{(i)}}}{K^{(i)}}$,
respectively. Note that in practice we never form $\hat{X}_{\beta,\lambda,\mu}$
explicitly, but instead form 
\[
W=X_{\beta,\lambda,\mu}^{1/2}Z
\]
 as above using $S$ matvecs by $X_{\beta,\lambda,\mu}^{1/2}$ and
then directly compute 
\[
\mathrm{diag}[\hat{X}_{\beta,\lambda,\mu}]=\left[W\odot W\right]\frac{\mathbf{1}_{S}}{S}
\]
 and 
\[
\frac{\mathbf{1}_{K^{(i)}}^{\top}\hat{X}_{\beta,\lambda,\mu}^{(i,i)}\mathbf{1}_{K^{(i)}}}{K^{(i)}}=\frac{1}{K^{(i)}}\left[\left(W^{(i)\top}\mathbf{1}_{K^{(i)}}\right)\odot\left(W^{(i)\top}\mathbf{1}_{K^{(i)}}\right)\right]\cdot\frac{\mathbf{1}_{S}}{S}.
\]

Defining 
\[
C_{\mathrm{eff}}[\lambda,\mu]:=C-\mathrm{diag}(\lambda)-\bigoplus_{i=1}^{N}\mu^{(i)}\Sigma^{(i)},
\]
 we can reduce the task of performing one matvec by $X_{\beta,\lambda,\mu}^{1/2}=e^{-\frac{\beta}{2}C_{\mathrm{eff}}[\lambda,\mu]}$
(just as in Section \ref{sec:trace}) to a constant number of matvecs
by $C_{\mathrm{eff}}=C_{\mathrm{eff}}[\lambda,\mu]$. Observe that
a matvec by $C_{\mathrm{eff}}$ can be performed in $O(N^{2}K)$ operations.
Indeed, note that a matvec by the last term in $C_{\mathrm{eff}}$
can be performed as 
\[
\left[\bigoplus_{i=1}^{N}\mu^{(i)}\Sigma^{(i)}\right]z=\bigoplus_{i=1}^{N}\frac{\mu^{(i)}}{K^{(i)}}\mathbf{1}_{K^{(i)}}\mathbf{1}_{K^{(i)}}^{\top}z^{(i)}
\]
 with $O(NK)$ cost. In particular, avoiding the treatment of $\mathbf{1}_{K^{(i)}}\mathbf{1}_{K^{(i)}}^{\top}$
as a dense matrix allows us to avoid any $O(NK^{2})$ contribution
to the scaling, by contrast to Section \ref{sec:trace}. 

Meanwhile, the number $S$ of matvecs that we require \emph{also}
scales better than the number required in Section \ref{sec:trace}
(as analyzed in Proposition \ref{prop:concentration}). Indeed, the
following proposition guarantees that $S=\tilde{O}(1)$ suffices,
meaning that one iteration of the optimizer can be performed with
the nearly optimal scaling of $\tilde{O}(\mathrm{nnz}(C))=O(N^{2}K)$.
\begin{prop}
\label{prop:concentration2} Let $\ve,\delta\in(0,1)$. With notation
as in the preceding discussion, there exists a constant $C>0$ such
that if $S\geq\frac{C\,\log(L/\delta)}{\ve^{2}}$, then with probability
at least $\delta$, we have that both 

\[
(1-\ve)\,\mathrm{diag}[X_{\beta,\lambda,\mu}]\leq\mathrm{diag}[\hat{X}_{\beta,\lambda,\mu}]\leq(1+\ve)\,\mathrm{diag}[X_{\beta,\lambda,\mu}]
\]
 and 
\[
(1-\ve)\,\mathbf{1}_{K^{(i)}}^{\top}X_{\beta,\lambda,\mu}^{(i,i)}\mathbf{1}_{K^{(i)}}\leq\mathbf{1}_{K^{(i)}}^{\top}\hat{X}_{\beta,\lambda,\mu}^{(i,i)}\mathbf{1}_{K^{(i)}}\leq(1+\ve)\,\mathbf{1}_{K^{(i)}}^{\top}X_{\beta,\lambda,\mu}^{(i,i)}\mathbf{1}_{K^{(i)}}
\]
 for all $i=1,\ldots,N$.
\end{prop}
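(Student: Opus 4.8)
The plan is to reduce both statements to a single scalar concentration estimate for quadratic forms $v^\top \hat{X}_{\beta,\lambda,\mu} v$ against a fixed deterministic vector $v$, and then take a union bound over the $L + N$ vectors that appear. Concretely, for each diagonal entry $\mathrm{diag}[\hat{X}_{\beta,\lambda,\mu}]_p = e_p^\top \hat{X}_{\beta,\lambda,\mu} e_p$ we take $v = e_p$ (the $p$-th standard basis vector), and for each block sum $\mathbf{1}_{K^{(i)}}^\top \hat{X}_{\beta,\lambda,\mu}^{(i,i)} \mathbf{1}_{K^{(i)}}$ we take $v$ to be the vector that is $\mathbf{1}_{K^{(i)}}$ on block $i$ and zero elsewhere. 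In every case, writing $A := X_{\beta,\lambda,\mu}^{1/2}$ (symmetric PSD) and $y := A v$, we have $v^\top \hat{X}_{\beta,\lambda,\mu} v = \frac{1}{S}\, v^\top A Z Z^\top A v = \frac{1}{S}\, \| Z^\top y \|^2 = \frac{1}{S}\sum_{s=1}^{S} (z_s \cdot y)^2$, where the $z_s$ are i.i.d. standard Gaussians; each $(z_s \cdot y)^2$ is distributed as $\|y\|^2 \chi^2_1$, with mean $\|y\|^2 = v^\top X_{\beta,\lambda,\mu} v$ exactly the target quantity. So the estimator is unbiased, and the whole problem collapses to a sum of $S$ i.i.d. scaled chi-square random variables concentrating about their mean.

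The key steps, in order, are: (i) record the identity $v^\top \hat{X}_{\beta,\lambda,\mu} v = \frac{1}{S}\|Z^\top A v\|^2$ and the unbiasedness $\mathbb{E}[v^\top \hat{X}_{\beta,\lambda,\mu} v] = v^\top X_{\beta,\lambda,\mu} v$; (ii) apply a standard subgaussian/Bernstein tail bound for $\frac{1}{S}\chi^2_S$ — e.g. the Laurent--Massart bounds — to get $\mathbb{P}\big(|v^\top \hat{X}_{\beta,\lambda,\mu} v - v^\top X_{\beta,\lambda,\mu} v| > \ve\, v^\top X_{\beta,\lambda,\mu} v\big) \le 2 e^{-c S \ve^2}$ for $\ve \in (0,1)$ and a universal constant $c$; (iii) choose $S \ge \frac{C \log(L/\delta)}{\ve^2}$ with $C = (2/c)$ (say, so that $2e^{-cS\ve^2} \le 2\delta/L$), so each of the at most $L + N \le 2L$ events fails with probability at most $\delta/L$; (iv) union-bound over the $L$ diagonal entries and $N$ block-sum vectors to conclude all bounds hold simultaneously with probability at least $1 - \delta$ (after adjusting the constant to absorb the factor $2$, or by stating the union bound over $L+N \le 2L$ vectors). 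Note that, in contrast to Proposition \ref{prop:concentration}, there is no $K \log K$ factor: we are controlling a fixed finite collection of \emph{scalar} linear functionals of $\hat{X}_{\beta,\lambda,\mu}$, not the full spectrum of a $K^{(i)} \times K^{(i)}$ block, so a single $\log$ of the number of functionals suffices. This is exactly what makes the weak SDP iteration cost $\tilde{O}(\mathrm{nnz}(C))$ with $S = \tilde{O}(1)$.

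The main obstacle — though it is a mild one — is simply being careful that the two stated inequalities are of genuinely the same type: the diagonal bound is an entrywise (coordinatewise) inequality on the vector $\mathrm{diag}[\hat{X}]$, and the block-sum bound is a scalar inequality for each $i$, but both are instances of the template ``quadratic form against a fixed $v$ concentrates multiplicatively about its mean,'' so the same chi-square tail handles both. One should also note the harmless degenerate case $v^\top X_{\beta,\lambda,\mu} v = 0$ (which cannot occur here since $X_{\beta,\lambda,\mu} \succ 0$ as a matrix exponential, so $\|y\| > 0$ whenever $v \ne 0$), so the multiplicative form of the bound is well-posed throughout. No Golden--Thompson, matrix monotonicity, or operator-norm machinery is needed here — only the Johnson--Lindenstrauss-style scalar estimate and a union bound.
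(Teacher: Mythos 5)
Your proposal is correct and follows essentially the same route as the paper: both reduce each quantity to $\frac{1}{S}\Vert Z^{\top}a\Vert^{2}$ for a fixed vector $a$ (a column of $X_{\beta,\lambda,\mu}^{1/2}$ for the diagonal entries, $\left[X_{\beta,\lambda,\mu}^{1/2}\right]^{(:,i)}\mathbf{1}_{K^{(i)}}$ for the block sums), invoke scalar Johnson--Lindenstrauss/chi-square concentration, and union bound over the $O(L)$ events. Your version is merely more explicit about the underlying chi-square tail bound, whereas the paper cites the one-dimensional case of the subspace-embedding result used in Proposition \ref{prop:concentration}; the content is identical.
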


\begin{rem}
As was the case in Proposition \ref{prop:concentration}, these relative
error bounds imply absolute error bounds for the logarithms used within
our optimizers. (In fact, this point is much simpler here because
the logarithms are scalar, i.e., we do not need to use matrix-monotonicity.)
\end{rem}

\begin{proof}
The first statement follows from the argument of Proposition \ref{prop:concentration}
in the case where $K^{(i)}=1$ for all $i$.

For the second statement, for any fixed $i$ we observe that 
\[
\mathbf{1}_{K^{(i)}}^{\top}\hat{X}_{\beta,\lambda,\mu}^{(i,i)}\mathbf{1}_{K^{(i)}}=(Z^{\top}a)^{\top}(Z^{\top}a),
\]
 where 
\[
a:=\left[X_{\beta,\Blam}^{1/2}\right]^{(:,i)}\mathbf{1}_{K^{(i)}}\in\R^{L\times1}.
\]
 Then $\tilde{a}:=Z^{\top}a$ is a Johnson-Lindenstrauss random projection
of $a$, and standard results~\cite{Martinsson_Tropp_2020} guarantee the subspace embedding
property 
\[
(1-\ve)\,a^{\top}a\leq\tilde{a}^{\top}\tilde{a}\leq(1+\ve)\,a^{\top}a.
\]
for $S=\Omega\left(\frac{\log(1/\delta)}{\ve^{2}}\right)$. Then the
union bound over $i=1,\ldots,N\leq L$ guarantees that the second
statement to be shown.
\end{proof}

\subsubsection{Algorithm summary}

The algorithm outlined above for solving the weak regularized SDP
(\ref{eq:weakSDPreg}) is summarized in Algorithm \ref{alg:weak}.
Note once again the inclusion of the damping parameter $\gamma>0$.

\begin{algorithm}
\caption{Optimizer for weak regularized SDP (\ref{eq:weakSDPreg})}

\begin{algorithmic}[1]
\Require{Cost matrix $C=-Q$, regularization parameter $\beta > 0$, randomization parameter $S$, damping parameter $\gamma > 0$}
\State{Initialize $\lambda=0_{L}$, $\mu=0_{N}$ }
\While{$\lambda$, $\mu$ not converged}
\State{Set $t \leftarrow t+1$ and $\eta \leftarrow \min(\gamma / t,1)$}
\State{Let $Z\in\R^{L\times S}$ have i.i.d. standard Gaussian entries}
\State{Construct a matvec oracle for  \[ C_{\mathrm{eff}}[\lambda,\mu]:=C-\mathrm{diag}(\lambda)-\bigoplus_{i=1}^{N}\mu^{(i)} \Sigma^{(i)} \]}
\State{Form $W=e^{-\frac{\beta}{2}C_{\mathrm{eff}}[\lambda, \mu]}Z\in\R^{L\times S}$ using the algorithm of \cite{expmv} or Chebyshev expansion}
\State{Form $w^{(i)}=W^{(i)\top}\mathbf{1}_{K^{(i)}}/\sqrt{K^{(i)}}\in\R^{S}$ for each $i=1,\ldots,N$}
\State{Form $b=\left[W\odot W\right]\frac{\mathbf{1}_{S}}{S}$ and $b^{(i)}=$ $\left[w^{(i)}\odot w^{(i)}\right]\cdot\frac{\mathbf{1}_{S}}{S}$ for each $i=1,\ldots,N$}
\State{Set $\lambda\leftarrow\lambda- \eta \beta^{-1}\log\left(b\right)$ and $\mu^{(i)}\leftarrow\mu^{(i)}- \eta \beta^{-1}\log(b^{(i)})$ for $i=1,\ldots,N$}
\EndWhile
\end{algorithmic}\label{alg:weak}
\end{algorithm}

By Proposition \ref{prop:concentration2} we can guarantee $\ve$-error
(uniformly entrywise) in our updates of $\mu,\lambda$ with $S=\tilde{O}(1/\ve^{2})$.
In this sense the total cost (for fixed $\beta$) of an $\ve$-accurate
iteration is $\tilde{O}(\text{nnz}(C)/\ve^{2})$. Elsewhere in the
manuscript we think of $\ve$ as constant in our scaling considerations.

\section{Recovery \label{sec:recovery}}

Next we discuss how to recover a collection of correspondences from
keypoints to registry points, i.e., row-partial permutation matrices
$P^{(i)}\in\R^{K^{(i)}\times M}$ for $i=1,\ldots,M$, from our dual
solution. Automatically we then obtain cycle-consistent partial matchings between keypoints in all pairs of images.

Note carefully that we do not want this recovery to impose
a computational bottleneck relative to the optimizer itself. Therefore
the computational considerations are different for the case of the
strong regularized SDP (\ref{eq:strongSDPreg}) and the weak regularized
SDP (\ref{eq:weakSDPreg}), motivating the `slow' and `fast' recovery algorithms that we present below. For each of these scenarios,
the case of full permutation synchronization is simpler, so we present it first.

Finally, we introduce an alternative `masked recovery' procdure which does not output a cycle-consistent correspondences, but rather attempts to individually classify observed correspondences as correct or incorrect.

Throughout, we adopt the notation $[n]:=\{1,\ldots,n\}$ for arbitrary
positive integer $n$.

\subsection{Slow recovery}

Let $X=X_{\beta,\Blam}$ denote the primal matrix associated with
the dual optimizer $\Blam$ of the strong regularized SDP (\ref{eq:strongSDPreg})
. We never want to form $X$ explicitly. Instead we want to access
it only via a matvec oracle which can be implemented as above using
the algorithm of~\cite{expmv} or Chebyshev expansion. To avoid imposing
a bottleneck, we merely need to achieve recovery with $\tilde{O}(N^{2}K^{2}+NK^{3})$
cost. In particular we can call the matvec oracle $\tilde{O}(K)$
times.

We will see that this is straightforwardly possible in the case of
full permutation recovery. Meanwhile, in the case of partial permutation
recovery, with the same cost we can recover, for a single image, all
correspondences between keypoints in this image and those of all other
images.

To fully register all keypoints in all images, the recovery cost depends
somewhat subtly on the amount of overlap between distinct images and,
relatedly, the total number $M$ of ground truth registry points.
Under the natural assumption that each of the registry points appears
in a fixed fraction of at least $\tilde{\Omega}(K/M)$ of the images,
the approach described in this section can achieve recovery with $\tilde{O}(N^{2}KM+NK^{2}M)$
additional cost. Therefore if $M/K=O(1)$, the scaling of this recovery
approach is no worse than that of the optimization approach for (\ref{eq:strongSDPreg}).

\subsubsection{Full permutation recovery}

The recovery in this case is simplest. Recall that in this setting
we have $K^{(i)}=K$ for all $i=1,\ldots,N$, and we seek to recover
$X\approx PP^{\top}$, where the blocks $P^{(i)}\in\R^{K\times K}$
of $P$ are \emph{full} permutation matrices.

To achieve our aim, define 
\begin{equation}
E^{(j)}=\left(\begin{array}{c}
0_{K\times K}\\
\vdots\\
0_{K\times K}\\
\mathbf{I}_{K}\\
0_{K\times K}\\
\vdots\\
0_{K\times K}
\end{array}\right)\leftarrow\text{\ensuremath{j}-th block}\label{eq:Ej0}
\end{equation}
 for any $j\in[N]$. In particular the shape of $E^{(j)}$ is $NK\times K$.

Observe that the $j$-th block-column of $X$, 
\[
X^{(:,j)}=XE^{(j)},
\]
 can be formed using $K$ matvecs by $X$. If we assume $X\approx PP^{\top}$,
then 
\[
X^{(:,j)}\approx\left(\begin{array}{c}
P^{(1)}P^{(j)\top}\\
\vdots\\
P^{(N)}P^{(j)\top}
\end{array}\right).
\]

Taking $j=1$ for simplicity and assuming without loss of generality
that $P^{(1)}=\mathrm{\mathbf{I}_{K}}$, we can directly recover 
\[
X^{(:,1)}\approx\left(\begin{array}{c}
\mathbf{I}_{K}\\
P^{(2)}\\
\vdots\\
P^{(N)}
\end{array}\right).
\]
Then we can we can use the Hungarian algorithm~\cite{Kuhn_1955} to round
each block $X^{(i,1)}$ of $X^{(:,1)}$ to the nearest permutation
matrix with respect to the Frobenius norm, which, over all $i=1,\ldots,N$,
costs $O(NK^{3})$ in total.

\subsubsection{Partial permutation recovery \label{sec:partialrecov}}

Now possibly the $K^{(i)}$ are distinct, and we seek to recover $X\approx PP^{\top}$,
where the blocks $P^{(i)}\in\R^{K^{(i)}\times M}$ of $P$ are row-partial
permutation matrices. Moreover, the number $M$ of registry points
is \emph{a priori }unknown.

In this setting it is useful to define, more generally than in the
previous section, 

\begin{equation}
E^{(j)}=\left(\begin{array}{c}
0_{K^{(1)}\times K^{(j)}}\\
\vdots\\
0_{K^{(j-1)}\times K^{(j)}}\\
\mathbf{I}_{K^{(j)}}\\
0_{K^{(j+1)}\times K^{(j)}}\\
\vdots\\
0_{K^{(N)}\times K^{(j)}}
\end{array}\right)\in\R^{L\times K^{(j)}}\label{eq:Ej}
\end{equation}
 for arbitrary $j\in[N]$.

Then 
\[
X^{(:,j)}=XE^{(j)}\approx\left(\begin{array}{c}
P^{(1)}P^{(j)\top}\\
\vdots\\
P^{(N)}P^{(j)\top}
\end{array}\right).
\]
 can again be formed using $O(K)$ matvecs by $X$.

We cannot hope to recover $P$ all at once, but for any fixed $i\neq j$,
we can recover all keypoint correspondences between images $i$ and
$j$. It is useful to adopt the indexing convention $(i,k)$ for the
$k$-th keypoint in image $i$. Let 
\[
\mathcal{K}:=\left\{ (i,k)\,:\,i\in[N],\,k\in[K^{(i)}]\right\} 
\]
 denote the set of all keypoints.

Now fix $i\neq j$, and let $e_{l}^{(j)}\in\R^{1\times K^{(j)}}$
($l=1,\ldots,K^{(j)}$) denote the standard basis vectors in $\R^{K^{(j)}}$,
treated as row vectors, with the convention that $e_{0}^{(j)}=0_{1\times K^{(j)}}$.
Then after extracting the block $X^{(i,j)}$, we define (only tentatively,
since we will revise our approach shortly): 
\[
l_{ij}(k)=\underset{l\in\{0\}\cup[K^{(j)}]}{\mathrm{argmin}}\Vert X_{k,:}^{(i,j)}-e_{l}^{(j)}\Vert,
\]
 for each $k=1,\ldots,K^{(i)}$. If $l_{ij}(k)=0$, then no identification
is made yet for keypoint $(i,k)$. However, if $l_{ij}(k)\neq0$,
we identify a correspondence $(i,k)\sim(j,l_{ij}(k))$. These correspondences
will generate an equivalence relation.

Proceeding more carefully, we should force the correspondences $k\mapsto l_{ij}(k)$
to be injective (ignoring the unpaired $k$ for which $l_{ij}(k)=0$)
by determining $l_{ij}(k)$ sequentially for $k=1,\ldots,K^{(i)}$,
restricting each successive $\mathrm{argmin}$ to the subset of target
indices not yet claimed (together with the zero index).

Now we have seen that for any fixed $j$, using $O(K)$ matvecs, we
may recover all possible identifications of the form $(j,l)\sim(i,k)$.
However, to recover all identifications over all $j$, without the
possibility of introducing (by transitivity of our equivalence relation)
any identifications of keypoints within the same image, we must adopt
the more careful procedure outlined in Algorithm \ref{alg:strongpartialrecovery}. 

\begin{algorithm}
\caption{Slow partial permutation recovery}

\begin{algorithmic}[1]

\Require{Matvec oracle for the primal solution $X$}

\State{Initialize $m=0$ and $\mathcal{S}^{(i)}=[K^{(i)}]$ for $i=1,\ldots,N$}

\While{there exists $i\in[N]$ such that $\mathcal{S}^{(i)}\neq\emptyset$}

\State{Set $j\leftarrow\underset{j'\in[N]}{\mathrm{argmin}}\left\{ \sum_{i=1}^{N}\Vert Q_{\mathcal{S}^{(i)},\mathcal{S}^{(j')}}^{(i,j')}\Vert_{\mathrm{F}}^{2}\right\}$, or alternatively choose uniformly randomly from indices not previously chosen \label{state:choice}}

\State{Form $E^{(j)}\in\R^{L\times K^{(j)}}$ following (\ref{eq:Ej})}

\State{Form $X^{(:,j)}=XE^{(j)}$ using $K^{(j)}$ matvecs by $X$}

\For{$l\in\mathcal{S}^{(j)}$}

\State{Set $m\leftarrow m+1$}
\State{Set $\mathcal{R}(j,l)\leftarrow m$}

\EndFor

\For{$i\in[N]\backslash\{j\}$}

\State{Set $\mathcal{T} \leftarrow \mathcal{S}^{(j)}$}

\For{$k\in\mathcal{S}^{(i)}$}

\State{Set $l\leftarrow\underset{l\in\{0\}\cup \mathcal{T} }{\mathrm{argmin}}\left\{ \Vert X_{k,:}^{(i,j)}-e_{l}^{(j)}\Vert\right\}$}

\If{$l\neq 0$}
\State{Set $\mathcal{S}^{(i)}\leftarrow\mathcal{S}^{(i)}\backslash\{k\}$}
\State{Set $\mathcal{T} \leftarrow \mathcal{T} \backslash \{l\}$}
\State{Set $\mathcal{R}(i,k)\leftarrow\mathcal{R}(j,l)$}
\EndIf

\EndFor
\EndFor

\State{$\mathcal{S}^{(j)}\leftarrow\emptyset$}

\EndWhile

\State{Set $M \leftarrow m$}

\State{\Return{ $\mathcal{R}:\mathcal{K}\ra [M]$ }}
\end{algorithmic}\label{alg:strongpartialrecovery}
\end{algorithm}

Intuitively, the algorithm functions as follows. We maintain for each
image $i$ a list of keypoints $\mathcal{S}^{(i)}$ that have not
yet been `registered,' or identified with a registry point. We remove
keypoints from these lists as registrations are made, and once removed
they become ineligible for any future registration. 

At every stage, we pick the image $j$ which maximizes the total number
of correspondences between its unregistered keypoints and unregistered
keypoints across all remaining images, measured according to our input
data $Q$. (Alternatively, we can just choose $j$ uniformly randomly
from all images not previously chosen.) We will find and register
all keypoints in all images $i\in[N]\backslash j$ that correspond
to some keypoint in $j$. We will assign identified images to the
same registry point.

To accomplish this, first we must expand our registry set to accommodate
points seen for the first time in the $j$-th image. We loop over
all unregistered keypoints in the $j$-th image and assign them to
new registry points, incrementing our counter $m$ for the number
of distinct registry points discovered so far.

Then we form the $j$-th block column $X^{(:,j)}$ as above and sequentially
determine all possible identifications of the form $(i,k)\sim(j,l)$
between previously unregistered keypoints, successively removing any
identified keypoints $(i,k)$ from $\mathcal{S}^{(i)}$ and maintaining
`injectivity,' i.e., that we do not have $(i,k)\sim(j,l)$ and $(i,k')\sim(j,l)$
for $k\neq k'$. When we remove a keypoint $(i,k)$, we assign it
to the same registry point as its identified partner $(j,l$) in the
$j$-th image.

Then we also set $\mathcal{S}^{(j)}\leftarrow\emptyset$, indicating
that all keypoints in the $j$-th image have now been registered.

We continue in this fashion, selecting a new image index $j$ and
registering all keypoints corresponding to any keypoint in the $j$-th
image, until all keypoints have been eliminated. (Note that, since
$\mathcal{Q}^{(i,i)}=\mathbf{I}_{K^{(i)}}$ for all $i$, the objective
used in Step \ref{state:choice} is zero if and only if $\mathcal{S}^{(j')}=\emptyset$.
Therefore we never repeat any index $j$ used in an earlier stage.)

Ultimately the algorithm returns a number $M$ of registry points
and registration map 
\[
\mathcal{R}:\mathcal{K}\ra[M]
\]
 which assigns each keypoint $(i,k)\in\mathcal{K}$ to a registry
point, such that moreover $\mathcal{R}(i,\,\cdot\,)$ is injective
for every $i$. Such a map can be identified with a collection $P^{(i)}\in\R^{K^{(i)}\times M}$,
$i=1,\ldots,N$, of row-partial permutation matrices via $P_{kl}^{(i)}=\delta_{k,\mathcal{R}(i,k)}$.

To assess the scaling of the recovery algorithm, it suffices to bound
the number $J$ of images that must be processed before the algorithm
terminates. It is easier to analyze the random selection approach.
Let us assume that each registry point appears in a fraction of at
least $f$ of the images. Then if images are chosen randomly, the
probability that there exists a registry point not contained in any
of the first $J$ images is bounded (via the union bound) by $M(1-f)^{J}\leq Me^{-Jf}$.
It follows that if 
\[
J\geq\frac{\log(M/\delta)}{f},
\]
 then, with probability at least $1-\delta$, all registry points
have been seen in at least one of the first $J$ images, meaning that
the algorithm terminates in $J$ steps.

Thus the expected total cost of the recovery is $\tilde{O}(N^{2}K^{2}/f+NK^{3}/f)$.
Under the natural assumption that $f=\tilde{O}(K/M)$, we obtain a
recovery cost of $\tilde{O}(N^{2}KM+NK^{2}M)$.

\subsection{Fast recovery}

Let $X=X_{\beta,\lambda,\mu}$ denote the primal matrix associated
with the dual optimizer $\lambda,\mu$ of the weak regularized SDP
(\ref{eq:weakSDPreg}). As above, we never want to form $X$ explicitly
and only access it via a matevec oracle. To avoid imposing a bottleneck,
need to achieve recovery with $\tilde{O}(NK^{2})$ cost. In particular
we can call the matvec oracle only $\tilde{O}(1)$ times.

We will see that this is possible in the case of full permutation
recovery. And again, in the case of partial permutation recovery,
with the same cost we can recover, for a single image, all correspondences
between keypoints in this image and those of all other images.

To fully register all keypoints in all images, adopting the natural
assumption that each of the registry points appears in a fixed fraction
of at least $\tilde{\Omega}(K/M)$ of the images, the approach described
in this section can achieve recovery with $\tilde{O}(N^{2}M)$ additional
cost. Therefore if $M/K=O(1)$, the scaling of this recovery approach
is no worse than that of the optimization approach for (\ref{eq:weakSDPreg}).

\subsubsection{Full permutation}

Recall that in this setting we have $K^{(i)}=K$ for all $i=1,\ldots,N$,
and we seek to recover $X\approx PP^{\top}$, where the blocks $P^{(i)}\in\R^{K\times K}$
of $P$ are \emph{full} permutation matrices.

The key observation is that instead of multiplying $X$ by $E^{(j)}$
as defined by (\ref{eq:Ej0}), it suffices to multiply $X$ by a matrix
with fewer columns. Indeed, within the construction of $E^{(j)}$,
the rows of the identity matrix block can be viewed as `one-hot encodings'\emph{
}of the keypoint indices of the $j$-th image. However, it is possible
to encode these indices with fewer than $K$ bits.

Concretely, we can let 
\begin{equation}
b:[K]\ra\{-1,1\}^{d}\label{eq:binencoding}
\end{equation}
 be defined as follows. Take $d=\lceil\log_{2}K\rceil$, and for $k\in[K]$,
let $b(k)$ be the $d$-digit binary representation of $k-1$, replacing
all $0$'s with $(-1)$'s.

For a more robust encoding, we can let $\tilde{K}>K$ and take $d=\lceil\log_{2}\tilde{K}\rceil$.
Then let $\tau$ be a random permutation on $\tilde{K}$ letters,
and let $b(k)$ be the $d$-digit binary representation of $\tau(k)-1$,
replacing all $0$'s with $(-1)$'s.

Then let 
\[
E^{(j)}=\left(\begin{array}{c}
0_{K\times d}\\
\vdots\\
0_{K\times d}\\
B\\
0_{K\times d}\\
\vdots\\
0_{K\times d}
\end{array}\right)\leftarrow\text{\ensuremath{j}-th block}
\]
 where 
\[
B=\left(\begin{array}{c}
b(1)\\
\vdots\\
b(K)
\end{array}\right)\in\R^{K\times d}
\]
 and the binary encodings $b(k)$ are viewed as $d$-dimensional row
vectors. Importantly, we need only take $d=\tilde{O}(K)$ to obtain
an injective encoding, so $E^{(j)}$ has only $\tilde{O}(K)$ columns.

In the setting of full permutation recovery, we can again assume without
loss of generality that $P^{(1)}=\mathbf{I}_{K}$ and simply consider
$j=1$. Then $X\approx PP^{\top}$ implies that 
\[
Y:=XE^{(1)}\approx\left(\begin{array}{c}
B\\
P^{(2)}B\\
\vdots\\
P^{(N)}B
\end{array}\right).
\]
 Note that $P^{(i)}B$ is a permutation of the rows of $B$. Therefore
$\sigma^{(i)}:[K]\ra[K]$ defined by 
\[
\sigma^{(i)}(k):=\underset{l\in[K]}{\mathrm{argmin}}\ \Vert Y_{k,:}^{(i)}-b(l)\Vert,
\]
 recovers the keypoint correspondence of the $i$-th image to the
first image. In turn we recover $P^{(i)}$ via $P^{(i)}=\delta_{k,\sigma^{(i)}(k)}.$

To maintain injectivity of $\sigma^{(i)}:[K]\ra[K]$, we can compute
$\sigma^{(i)}(k)$ sequentially for $k=1,\ldots,K$, restricting the
$\mathrm{argmin}$ in each step to target indices not yet claimed. 

\subsubsection{Partial permutation}

The binary encoding trick from the last section can be applied in
the setting of partial permutation recovery.

Construct, as in the last section, an injective binary encoding 
\[
b^{(j)}:[K^{(j)}]\ra\{-1,1\}^{d^{(j)}}
\]
 for $j=1,\dots,N$. We can take $d^{(j)}=\lceil\log_{2}K^{(j)}\rceil$,
but we could also take $d^{(j)}$ larger, as described above, for
more robust recovery.

Next we define 
\begin{equation}
E^{(j)}:=\left(\begin{array}{c}
0_{K\times d^{(j)}}\\
\vdots\\
0_{K\times d^{(j)}}\\
B^{(j)}\\
0_{K\times d^{(j)}}\\
\vdots\\
0_{K\times d^{(j)}}
\end{array}\right)\in\R^{L\times d^{(j)}}\label{eq:weakEj}
\end{equation}
 for $j=1,\ldots,N$, where 
\[
B^{(j)}:=\left(\begin{array}{c}
b^{(j)}(1)\\
\vdots\\
b^{(j)}(K)
\end{array}\right)\in\R^{K^{(j)}\times d^{(j)}}.
\]
 Then the recovery procedure is presented in Algorithm \ref{alg:weakpartialrecovery},
which only makes small suitable modifications to Algorithm \ref{alg:strongpartialrecovery}.

\begin{algorithm}
\caption{Fast partial permutation recovery}

\begin{algorithmic}[1]

\Require{Matvec oracle for the primal solution $X$, injective binary encodings $b^{(i)}:[K^{(i)}] \ra \{-1,1\}^{d^{(i)}}$ for $i=1,\ldots,N$}
        
\State{Initialize $m=0$ and $\mathcal{S}^{(i)}=[K^{(i)}]$ for $i=1,\ldots,N$}

\While{there exists $i\in[N]$ such that $\mathcal{S}^{(i)}\neq\emptyset$}

\State{Set $j\leftarrow\underset{j'\in[N]}{\mathrm{argmin}}\left\{ \sum_{i=1}^{N}\Vert Q_{\mathcal{S}^{(i)},\mathcal{S}^{(j')}}^{(i,j')}\Vert_{\mathrm{F}}^{2}\right\}$, or alternatively choose uniformly randomly from indices not previously chosen}

\State{Form $E^{(j)}\in\R^{L\times d^{(j)}}$ following (\ref{eq:weakEj}), using the binary encoding $b^{(j)}$}

\State{Form $Y=XE^{(j)}$ using $d^{(j)}$ matvecs by $X$}

\For{$l\in\mathcal{S}^{(j)}$}

\State{Set $m\leftarrow m+1$}
\State{Set $\mathcal{R}(j,l)\leftarrow m$}

\EndFor

\For{$i\in[N]\backslash\{j\}$}

\State{Set $\mathcal{T} \leftarrow \mathcal{S}^{(j)}$}

\For{$k\in\mathcal{S}^{(i)}$}

\State{Set $l\leftarrow\underset{l\in\{0\}\cup \mathcal{T} }{\mathrm{argmin}}\left\{ \Vert Y_{k,:}^{(i)}-b^{(j)}(l) \Vert\right\}$}

\If{$l\neq 0$}
\State{Set $\mathcal{S}^{(i)}\leftarrow\mathcal{S}^{(i)}\backslash\{k\}$}
\State{Set $\mathcal{T} \leftarrow \mathcal{T} \backslash \{l\}$}
\State{Set $\mathcal{R}(i,k)\leftarrow\mathcal{R}(j,l)$}
\EndIf

\EndFor
\EndFor

\State{$\mathcal{S}^{(j)}\leftarrow\emptyset$}

\EndWhile

\State{Set $M \leftarrow m$}

\State{\Return{ $\mathcal{R}:\mathcal{K}\ra [M]$ }}
\end{algorithmic}\label{alg:weakpartialrecovery}
\end{algorithm}

By the same arguments as in the end of Section \ref{sec:partialrecov},
the algorithm (with random selection of the index $j$ at each stage)
terminates with high probability in $f^{-1}\log(M/\delta)$ steps,
under the assumption that each registry point appears in a fraction
of at least $f$ of the images. Thus the expected total cost of the
recovery is $\tilde{O}(N^{2}K/f)$. Under the natural assumption that
$f=\tilde{O}(K/M)$, we obtain a recovery cost of $\tilde{O}(N^{2}M)$.

\subsection{Masked recovery}\label{sec:mask}

Finally, we present a simpler recovery procedure in which we classify every observed correspondence as correct or incorrect and in particular do not insist on cycle consistency.

The procedure is simple and relies on using the stochastic Cholesky approximation $$ X \approx \hat{X} := \frac{1}{S} [X^{1/2} Z] \, [X^{1/2} Z]^\top $$ where $Z \in \mathbb{R}^{NK \times S}$ is a random matrix with independent standard Gaussian entries, to approximate the entries of $X$ on the nonzero pattern of the given correspondence matrix $Q$. Note that this is the same stochastic approximation employed within Algorithms~\ref{alg:strong} and~\ref{alg:weak}. After obtaining $\hat X \leftarrow Q \odot \hat{X}$, we round it to a binary matrix by thresholding its entries. To find the cut-off threshold, we provide two options.

\noindent \textbf{Option 1:} We first fit a two-component Gaussian Mixture Model (GMM) to the distribution of nonzero entries of $\hat X$ using the Expectation-Maximization algorithm, sorting the components by their means. The cutoff is then computed as the intersection point of the two Gaussian probability density functions, derived by solving for the value where their likelihoods are equal. 

\noindent\textbf{Option 2:} In cases where the distribution of the entries in $\hat{X}$ is not clearly bimodal, the threshold is set to the 80th or 90th percentile of the entries.

The pseudocode for the masked recovery procedure is provided in Algorithm~\ref{alg:maskedrecovery}.







\begin{algorithm}
\caption{Masked recovery}

\begin{algorithmic}[1]

\Require{Matvec oracle for the square-root primal solution $X^{1/2}$, number $S$ of stochastic shots}

\State{Let $Z \in \mathbb{R}^{L \times S}$ have i.i.d. standard Gaussian entries}

\State{Form $W = X^{1/2} Z \in \mathbb{R}^{L \times S}$, denote its rows as $w_k^{(i)} \in \mathbb{R}^{S}$ }

\State $\hat{X} \leftarrow 0 \in \R^{L\times L}$ (sparse)

\For{ $i,j,k,l$ such that $Q_{kl}^{(i,j)} \neq 0$}

\State{Set $\hat{X}_{kl}^{(i,j)} \leftarrow \frac{1}{S} \,  w_k^{(i)} \cdot w_l^{(j)}$ }

\EndFor

\State{$\hat X \leftarrow \text{threshold} (\hat X)$}

\State{\Return{ $\hat X$ }}
\end{algorithmic}\label{alg:maskedrecovery}
\end{algorithm}

\section{Experiments}\label{sec:experiments}
We compare the accuracy and speed of different algorithms on both synthetic and real datasets. 
\subsection{Implementation Details}
All experiments were conducted on a MacBook Pro equipped with an Apple M1 Pro chip and 16 GB of memory.

\vspace{2mm}
\noindent\textbf{Proposed Methods:}

\noindent We evaluate the performance of our proposed algorithms, denoted as $$\texttt{SDP-(weak/strong)-(fast/slow/thresh)}$$ according to which SDP relaxation is optimized and which recovery procedure is used after optimization. Here, \textit{weak} and \textit{strong} refer to the corresponding problem formulations \eqref{eq:weakSDPreg} and \eqref{eq:strongSDPreg}; \textit{fast} and \textit{slow} refer to the cycle-consistent recovery strategies of Algorithms \ref{alg:weakpartialrecovery} and \ref{alg:strongpartialrecovery}, while \textit{thresh} refers to the masked recovery strategy with threshold-based rounding (Algorithm \ref{alg:maskedrecovery}).

For all our methods, the damping parameter is set to $\gamma = 5$. The number of random vectors is $S = 20$ for the weak formulation and $S = 20 \,\max_i \{ K^{(i)} \}$ for the strong formulation. Both formulations use the same regularization parameter, chosen as $\beta = \lambda \log N / N$, where $\lambda \in \{5, 10, 20\}$. While larger values of $\lambda$ (and hence $\beta$) typically yield better accuracy at higher computational cost, we observe that in real data scenarios, which are more challenging and noisy, increasing $\lambda$ does not always improve performance. Therefore, we fix $\lambda = 5$ for all real-data experiments.

For the fast recovery method, we set $\tilde{K} = 10 \, \max_i \{ K^{(i)} \}$ in all experiments. The maximum number of iterations is set to 20 for \texttt{SDP-weak} and 10 for \texttt{SDP-strong}.

\vspace{2mm}
\noindent \textbf{Previous methods:}

\noindent  For previous methods, we compare with MatchEIG~\cite{MatchEIG}, Spectral~\cite{deepti}, and MatchFAME~\cite{MatchFAME}. Both MatchEIG and Spectral recover the top eigenvectors of the matrix $ Q $, but differ in their rounding procedures. In Spectral, each $ P^{(i)} $ is recovered by applying the Hungarian algorithm to the $ i $-th block of the eigenmatrix formed by the top eigenvectors. In contrast, MatchEIG does not recover the individual $ P^{(i)} $, but instead directly approximates the ground-truth matrix $ Q $. Specifically, each block $ Q^{(i,j)} $ is rounded to a binary matrix using a more heuristic but computationally faster method than the Hungarian algorithm.

The current state of the art MatchFAME combines a weighted version of Spectral with the inexpensive rounding approach used in MatchEIG to initialize $ P $. It then refines the solution using the iteratively reweighted projected power method (PPM)~\cite{Chen_PPM}, where the weights are estimated via a message-passing algorithm based on cycle consistency~\cite{cemp, MatchFAME}.

We use the original code and their default choices to implement the above methods.

\vspace{2mm}
\noindent\textbf{Evaluation metrics:}

\noindent For practical applications, we evaluate the performance of the PPS methods in classifying correct and incorrect keypoint matches. Given a matrix $ Q $ that encodes candidate keypoint correspondences, the goal is to refine $ Q $ by removing incorrect nonzero entries corresponding to false matches. Ideally, we aim to recover the matrix $ Q \odot Q^* $, where $ Q^* $ denotes the ground-truth match matrix and $ \odot $ represents the elementwise product. In practice, this is approximated by $\hat Z= Q \odot \hat{X} $, where $ \hat{X} $ is the output of a given method. We refer to $\hat Z$ as our filtered match, which retains only a subset of the existing matches. We note that for the masked recovery $\hat X=\hat Z$.

To evaluate accuracy, we use the following three metrics:
\begin{itemize}
    \item \textbf{Precision} (higher is better): the proportion of true matches among the retained matches.
    \[
    \text{Precision} = \frac{\text{nnz}(\hat{Z} \odot Q^*)}{\text{nnz}(\hat{Z})}.
    \]
    
    \item \textbf{Recall} (higher is better): the proportion of true matches retained after filtering, relative to the total number of ground-truth matches.
    \[
    \text{Recall} = \frac{\text{nnz}(\hat{Z} \odot Q^*)}{\text{nnz}(Q^*)}.
    \]
    
    \item \textbf{F1-Score} (higher is better): the harmonic mean of precision and recall, providing a balanced measure of performance
    \[
    \text{F1-Score} = \frac{2}{\frac{1}{\text{Precision}} + \frac{1}{\text{Recall}}}.
    \]
\end{itemize}

We note that while the F1-score provides a balanced evaluation by equally weighting precision and recall, in practice, precision is often considered more important. This is because it is critical that the remaining keypoint matches are correct, as long as their number is not too small.

\subsection{Synthetic Data}
We first compare the accuracy and speed of the different algorithms on synthetic data.

Specifically, we generate the data using the following model. We first set $N=100$ and $M=1000$. For each pair $(i, j)$, we define the ground-truth partial permutation matrix as
\[
Q^{(i,j)} = P^{(i)}P^{(j)\top},
\]
where each $P^{(i)} \in \mathbb{R}^{K^{(i)} \times M}$ is a partial permutation matrix. The value $K^{(i)}$ is drawn independently from the uniform distribution on the interval $[100, 200]$, and each $P^{(i)}$ is sampled uniformly from the space of partial permutations of the given size, with no all-zero rows.

We then corrupt each $Q^{(i,j)}$ independently with probability $q$, referred to as the corruption probability. Specifically, for each corrupted pair $(i, j)$, we generate two random partial permutation matrices $\widetilde{P}_1, \widetilde{P}_2 \in \mathbb{R}^{K^{(i)} \times M}$, each sampled uniformly from the space of partial permutations of the same size as $P^{(i)}$, with no all-zero rows. The corrupted observation is then given by
\[
\widetilde{Q}^{(i,j)} = \widetilde{P}_1 \widetilde{P}_2^\top.
\]
We note that for each corrupted pair $(i,j)$ one needs to resample $\widetilde{P}_1, \widetilde{P}_2$.

\begin{figure}[H]
    \centering    \includegraphics[width=1\linewidth]{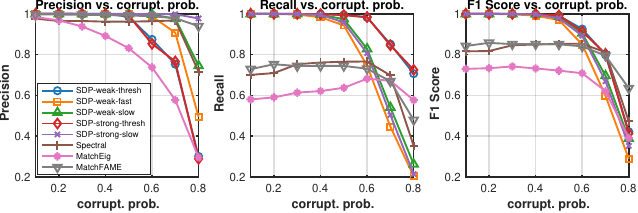}
    \caption{Performance comparison of algorithms. For both weak and strong formulations, $\lambda=20$.}
    \label{fig:line}
\end{figure}
In Figure~\ref{fig:line}, we compare the precision, recall, and F\textsubscript{1} score of our proposed methods against existing approaches. For the masked recovery with thresholding, the number of random samples is set to 200. Note that this value is independent of the parameter $S$ used for optimizing the SDP formulation. To round the solution to a binary matrix, we adopt the thresholding strategy described as Option 1 in Section~\ref{sec:mask}. For each corruption probability, all methods are run 10 times, and the reported precision, recall, and F\textsubscript{1} scores are averaged over these trials.

We observe that the \texttt{SDP-strong} formulation with slow recovery achieves the highest precision, comparable to that of MatchFAME. However, MatchFAME often yields significantly lower recall. Our SDP formulations with masked recovery achieve the best F\textsubscript{1} score across most corruption levels. In general, the proposed SDP-based methods substantially outperform both Spectral and MatchEIG, which may be attributed to their inaccurate estimates of $M$ (noting that $2K \neq M$). In contrast, our methods do not require any prior estimation of $M$.

\begin{figure}[H]
    \centering    \includegraphics[width=1\linewidth]{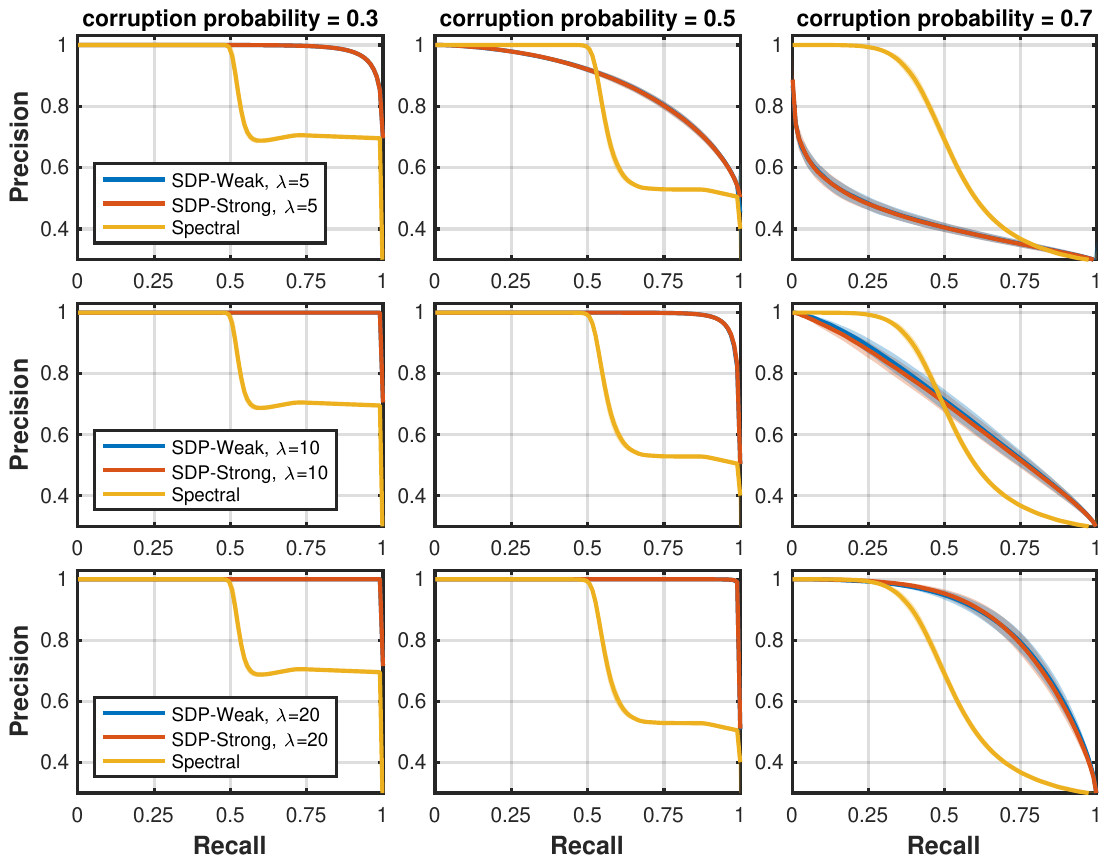}
    \caption{Precision-Recall curve for SDP and Spectral. Every point on the curve corresponds to a different threshold for the masked recovery. Each column of plots corresponds to different corruption probability. Each row corresponds to a different choice of $\lambda$ for the SDP formulation.}
    \label{fig:PR}
\end{figure}

In Figure~\ref{fig:PR}, we plot the precision-recall curves for the refined matrices $\hat{X}$ obtained from our SDP formulations and the low-rank approximations produced by spectral methods. For both the weak and strong SDP formulations, we fix the regularization parameter $\lambda$ and apply masked recovery with varying thresholds. The number of random samples for masked recovery is set to 200. Each threshold value yields a pair of precision and recall values, which corresponds to a point on the curve. Varying the threshold generates a full precision-recall curve for each method. For each threshold, we perform 10 independent trials, allowing us to compute and visualize a confidence region around each curve, defined as $\pm$ one standard deviation. Curves that lie closer to the top-right corner indicate better overall performance.

For the Spectral method, we construct the refined matrix as $VV^\top$, where $V$ consists of the top $\hat{M} = 2K$ eigenvectors of $Q$. Instead of rounding $VV^\top$ using the MatchEIG procedure or the Hungarian algorithm, we apply direct hard thresholding. Varying the threshold produces a corresponding precision-recall curve for the Spectral method.

We observe that increasing $\lambda$ generally improves the performance of the SDP-based methods. The weak and strong formulations perform almost the same across all settings. Our SDP approaches outperform the Spectral method in most scenarios, particularly when the corruption probability is less than or equal to 0.5. Even at a corruption level of 0.7, our SDP method with $\lambda = 20$ still demonstrates superior performance over Spectral.

\begin{table}[H]
  \centering
  \footnotesize                        
  \setlength{\tabcolsep}{3pt}          
  \caption{Runtime (seconds) for each method and setting.}
  \label{tab:runtime}
  %
  \resizebox{\textwidth}{!}{%
    \begin{tabular}{@{}cc || cc | ccc |ccc| cc@{}}
      \toprule
      \multicolumn{2}{c}{Data Size} &
      \multicolumn{2}{c}{MatchEIG} &
      \multicolumn{3}{c}{SDP-Weak} &
      \multicolumn{3}{c}{SDP-Strong} &
      \multicolumn{2}{c}{Recovery} \\
      \cmidrule(lr){1-2}\cmidrule(lr){3-4}\cmidrule(lr){5-7}\cmidrule(lr){8-10}\cmidrule(lr){11-12}
        $N$ & $K$ & Eig.\ & Round &
        $\lambda\!=\!5$ & $\lambda\!=\!10$ & $\lambda\!=\!20$ &
        $\lambda\!=\!5$ & $\lambda\!=\!10$ & $\lambda\!=\!20$ &
        Weak & Strong \\
      \midrule
      100  & 10 & 0.09  & 0.77  & 0.18 & 0.18 & 0.21  & 0.29  & 0.29 & 0.54  &0.27  & 0.28  \\
      100 & 100 & 2.72 & 6.56 & 1.29 & 1.58 & 1.72 & 14.2 &20.0 & 29.4 & 0.29 &  11.3\\
      100 & 1000 & 971.5 & 531.3 & 52.5 & 54.6 & 83.1
      & --- & --- &---& 3.03 & 122.3\\
      \hline 
      10 & 100 & 0.40 & 0.10 & 0.16 & 0.17 & 1.50 & 1.33 & 1.57 & 1.98 & 0.03 & 0.17\\
      100 & 100 & 2.72 & 6.56 & 1.29 & 1.58 & 1.72 & 14.2 &20.0 & 29.4 & 0.29 &  11.3\\
      1000 & 100 & 19.5 & 2001& 52.7 & 52.9 & 70.2 & ---&---&--- & 1119 & 3079 \\
      \midrule
      \bottomrule
    \end{tabular}%
  }
\end{table}

In Table~\ref{tab:runtime}, we compare the runtime (in seconds) of different methods across varying values of $N$ and $K$. We observe that the \texttt{SDP-weak} formulation offers a significant runtime advantage over MatchEIG when $N$ is relatively small and $K$ is large (e.g., the row with $N = 100$, $K = 1000$), which aligns with typical settings in real-world datasets. In many structure-from-motion (SfM) scenarios, the number of cameras ($N$) ranges from 10 to 500, while the number of keypoints per image ($K$) is often larger. These results suggest that \texttt{SDP-weak} scales more effectively in practice compared to spectral methods.

We also note that the \texttt{SDP-strong} formulation is slower than both MatchEIG and \texttt{SDP-weak}. For larger values such as $N = 1000$ or $K = 1000$, the \texttt{SDP-strong} method exceeds the memory capacity of the testing machine.

\subsection{Real Data}
To assess the practical effectiveness of our approach, we conducted experiments on the EPFL multi-view stereo benchmark \cite{EPFL} -- an established collection of real-world outdoor scenes widely used for evaluating keypoint matching algorithms. The dataset consists of six image sequences, each containing 8 to 30 photographs captured from different viewpoints.
Feature extraction was performed using the SIFT algorithm \cite{sift04}. Candidate correspondences between image pairs were established through nearest-neighbor search with a ratio test, and further filtered using geometric verification via RANSAC \cite{RANSAC:81}. This preprocessing pipeline produced the initial keypoint matches for all tested methods.

Since the ground-truth set of keypoints shared across views (i.e., the set of 3D points) is not directly available, we followed the strategy proposed in~\cite{MatchEIG} to estimate its size. Specifically, we set the universe size to twice the average number of keypoints per image and supplied this estimate to all baseline methods. Notably, our SDP-based algorithms operate independently of this parameter and do not rely on any universe size estimate.

A match is considered correct if the predicted correspondence lies within a specified distance threshold from the ground-truth. Since the datasets provide ground-truth camera parameters, we can predict the epipolar line on which a correct match should lie. The distance threshold is set to $1\%$ of the image diagonal, following \cite{MatchEIG}.

For all of our SDP formulations, we use $\lambda = 5$. Given the noisy nature of the dataset, we set the number of random samples for masked recovery to $S = 1000$. However, the distribution of entries in $\hat{X}$ does not exhibit a clearly bimodal pattern. As a result, we do not apply a Gaussian mixture model to estimate the threshold. Instead, we adopt the heuristic described as Option 2 in Section~\ref{sec:mask}, selecting the threshold as the 90th percentile of the entries in $\hat{X}$. In other words, we remove the top 10\% of nonzero entries, corresponding to the most uncertain matches.

While MatchFAME achieves higher precision in many cases, it frequently suffers from very low recall. In contrast, our SDP-based methods strike a better balance between precision and recall, leading to superior F\textsubscript{1} scores. Moreover, our weak formulation, when paired with either fast or masked recovery, is typically faster than existing approaches.

\begin{table}[t]
\centering
\resizebox{\linewidth}{!}{
\renewcommand{\arraystretch}{1}
\tabcolsep=0.08cm
\begin{tabular}{|l||cccc|cccc|cccc|cccc|cccc|cccc|}
\hline
\textbf{Dataset} & \multicolumn{4}{c|}{\textbf{Herz-Jesu-P8}} &
   \multicolumn{4}{c|}{\textbf{Herz-Jesu-P25}} &
   \multicolumn{4}{c|}{\textbf{Fountain-P11}} &
   \multicolumn{4}{c|}{\textbf{Entry-P10}} &
   \multicolumn{4}{c|}{\textbf{Castle-P19}} &
   \multicolumn{4}{c|}{\textbf{Castle-P30}} \\
\hline
$n$     & \multicolumn{4}{c|}{8}   & \multicolumn{4}{c|}{25}  & \multicolumn{4}{c|}{11}  & \multicolumn{4}{c|}{10}  & \multicolumn{4}{c|}{19}  & \multicolumn{4}{c|}{30}  \\
$\hat M$ & \multicolumn{4}{c|}{386} & \multicolumn{4}{c|}{517} & \multicolumn{4}{c|}{374} & \multicolumn{4}{c|}{432} & \multicolumn{4}{c|}{314} & \multicolumn{4}{c|}{445} \\
\hline
\textbf{Method $\backslash$ Stat.} & P & R & F & T & P & R & F & T & P & R & F & T & P & R & F & T & P & R & F & T & P & R & F & T \\
\hline
Input       & 94   & --  & --  & --  & 89.6 & --  & --  & --  & 94.3 & --  & --  & --  & 75.8 & --  & --  & --  & 69.7 & --  & --  & --  & 71.8 & --  & --  & --  \\\hline
MatchEIG    & 95.4 & 92.9 & 94.1 &  2.3 & 94.4 & 76.5 & 84.5 & 46   & 95.8 & 89.7 & 92.6 & 16.1 & 81.4 & 90.3 & 85.6 & 24.1 & 79.1 & 65.3 & 71.5 & 23.6 & 84.1 & 64.6 & 73.1 & 34.2\\\hline
Spectral    & 94.9 & 82.9 & 88.5 &  4.4 & 92.3 & 84   & 88   & 72.2 & 95.6 & 92.2 & 93.9 & 13.8 & 82.6 & 84.6 & 83.6 & 19.4 & 76.8 & 83.9 & 80.2 & 26.5 & 80.3 & 80.5 & 80.4 & 94.5\\\hline
MatchFame   & 95.9 & 89.7 & 92.7 &  2.6 & 95.7 & 79.8 & 87   & 13.8 & 97.2 & 90.9 & 93.9 &  3.4 & 86.7 & 74.5 & 80.1 &  4.1 & 88.6 & 57.9 & 70   &  5.5 & 90.9 & 65.3 & 76   & 14.7\\\hline
SDP-weak-fast   & 95.6 & 92.5 & 94   &  2.2 & 93.8 & 87   & 90.3 &  5.6 & 96   & 91.3 & 93.6 &  2.8 & 82.6 & 78.6 & 80.5 &  2.9 & 79.8 & 75.8 & 77.8 &  3.6 & 83.6 & 75.3 & 79.2 &  6.1\\\hline
SDP-weak-slow   & 95.5 & 96.3 & 95.9 &  3   & 93.7 & 88.5 & 91.1 & 22.2 & 95.7 & 93.6 & 94.7 &  4.4 & 81.9 & 85.1 & 83.5 &  5   & 80.3 & 82.9 & 81.6 &  8   & 83.5 & 80   & 81.7 & 25.5\\\hline
SDP-weak-thresh & 95.9 & 91.8 & 93.8 & 1.3 & 93.3 & 93.5 & 93.4 & 5.3 & 96.0 & 91.7 & 93.8 & 2.7 & 78.6 & 93.0 & 85.2 & 3.1 & 75.1 & 96.5 & 84.5 & 3.4 & 77.2 & 96.7 & 85.8 & 5.0\\\hline
SDP-strong-slow & 95.4 & 96.9 & 96.1 & 36.6 & 94.2 & 87.7 & 90.8 &540.9 & 95.9 & 94.5 & 95.2 & 49.5 & 82.3 & 90   & 86   & 72   & 80.5 & 82.7 & 81.6 & 76.8 & 84.7 & 80.4 & 82.5 & 371 \\\hline
SDP-strong-thresh & 96.3 & 92.1 & 94.1 & 35.3 & 93.8 & 94.0 & 93.9 & 527.1
 & 96.7 & 92.3 & 94.4 & 49.3 & 80.6 & 95.4 & 87.4 & 73.5 & 75.4 & 97.0 & 84.9 & 74.4 & 77.6 & 97.2 & 86.3 & 332.3\\\hline
\end{tabular}}
\caption{Performance on the EPFL datasets. $n$ is the number of cameras; $\hat M$, the approximated $M$, is twice the averaged $K_i$ over $i\in [n]$; P is the precision rate (the percentage of good matches within the filtered match); R is the recall rate (the percentage of remaining good matches after filtering compared to the total good matches before filtering); F is the F1-score $2/(1/P+1/R)$; T is runtime in seconds.}
\label{tab:epfl_final}
\end{table}

\appendix

\section{Proof of Theorem \ref{thm:reg} \label{sec:thm1}}
\begin{proof}
[Proof of Theorem \ref{thm:reg}.] It suffices to check the claims
for the weakest relaxation, i.e., the GW-type relaxation (\ref{eq:weakestSDP}).

Importantly, the constraints $X\succeq0$ and $\mathrm{diag}(X)=\mathbf{1}_{L}$
alone imply that $\vert X_{kl}^{(i,j)}\vert\leq1$ for all $i,j,k,l$.
(This follows directly from the fact that the determinant of any principal
$2\times2$ submatrix must be nonnegative.)

Since the entries of $Q$ are all either $0$ or $1$, for any feasible
$X$ it follows that 
\begin{equation}
\Tr[QX]\leq\mathbf{1}_{L}^{\top}Q\mathbf{1}_{L}.\label{eq:QXless}
\end{equation}
 But also 
\[
\Tr[QX^{\star}]=\Tr[Q^{2}]=\Vert Q\Vert_{\mathrm{F}}^{2}=\mathbf{1}_{L}^{\top}Q\mathbf{1}_{L}.
\]
 The last equality follows from the fact that since the entries of
$Q$ are all either 0 or 1, the Frobenius norm of $Q$ coincides with
the sum of its entries. Therefore $X^{\star}$ is an optimal solution.
This establishes the first claim of the theorem.

Note that the inequality (\ref{eq:QXless}) holds with equality for
feasible $X$ if and only if the entries of $X$ on the nonzero sparsity
pattern of $Q$ are all 1's. In other words, any feasible $X$ is
optimal if and only if the entries of $X$ on the nonzero sparsity
pattern of $Q$ are all 1's.

Before proving the second claim, we explain how we can view $Q$ as
a block-diagonal matrix by suitable reindexing. Let $(i,k)$ be a
multi-index for the rows of $Q$, with $i\in\{1,\ldots,N\}$ denoting
the block index and $k\in\{1,\ldots,K^{(i)}\}$ denoting the index
within the $i$-th block. Then we can partition the row indices into
$M$ disjoint subsets
\[
\mathcal{I}_{m}=\{(i,k)\,:\,P_{km}^{(i)}=1\},\quad m=1,\ldots,M.
\]
 Here $\mathcal{I}_{m}$ can be viewed as the set of keypoints (across
all images) that correspond to the $m$-th registry point. Evidently
each keypoint must fall into exactly one of these subsets. Note that
$Q_{k,l}^{(i,j)}=1$ if and only if $(i,k)$ and $(j,l)$ lie in the
same $\mathcal{I}_{m}$. Let $L_{m}=\vert\mathcal{I}_{m}\vert$.

Therefore we can view $Q$ as a direct sum or block-diagonal matrix
according to this partition of the rows, i.e., as 
\begin{equation}
Q=\bigoplus_{m=1}^{M}\mathbf{1}_{L_{m}}\mathbf{1}_{L_{m}}^{\top}.\label{eq:Qblockdiag}
\end{equation}
 After suitable reindexing the diagonal blocks are simply $L_{m}\times L_{m}$
matrices consisting of all 1's.

Then it is clear that $X$ agrees with $Q$ on the sparsity pattern
of $Q$ if and only if $X_{\mathcal{I}_{m},\mathcal{I}_{m}}=\mathbf{1}_{L_{m}}\mathbf{1}_{L_{m}}^{\top}$
for all $m=1,\ldots,M$. We already established that such agreement
is equivalent to optimality for feasible $X$.

Therefore, to prove the second claim in the theorem, consider the
following problem: 
\begin{align}
\underset{X\in\R^{L\times L}}{\text{minimize}}\ \  & S(X)\label{eq:maxent}\\
\text{subject to}\ \  & X_{\mathcal{I}_{m},\mathcal{I}_{m}}=\mathbf{1}_{L_{m}}\mathbf{1}_{L_{m}}^{\top},\quad m=1,\ldots,M,\nonumber \\
 & X\succeq0.\nonumber 
\end{align}
 It suffices to show that the unique solution is $X^{\star}$. (Since
$S$ is strictly concave, uniqueness is automatically guaranteed.)

By the quantum Gibbs variational principle~\cite{Dissertation}, which tells
us how to minimize entropy subject to linear constraints, intuitively
we expect that the solution $\tilde{X}$ of (\ref{eq:maxent}) must
take the form 
\[
\tilde{X}=e^{\bigoplus_{m=1}^{M}A_{m}}=\bigoplus_{m=1}^{M}e^{A_{m}},
\]
 where the $A_{m}\in\R^{M_{m}\times M_{m}}$ are symmetric matrices
which are dual variables for the block constraints of (\ref{eq:maxent}).
However, to interpret such a claim appropriately we are forced to
consider $A_{m}$ with some negatively infinite eigenvalues, because
the constraints of (\ref{eq:maxent}) force any feasible $X$ to have
a nontrivial null space. Allowing ourselves (for the moment) to stretch
the interpretation of $A_{m}$ accordingly, the constraints of (\ref{eq:maxent})
then imply that $e^{A_{m}}=\mathbf{1}_{M_{m}}\mathbf{1}_{M_{m}}^{\top}$,
and therefore by (\ref{eq:Qblockdiag}), $\tilde{X}=Q=X^{\star}$,
as was to be shown.

To complete the proof rigorously, we can first `quotient out' by the
null space that is demanded by the constraints, then apply the Gibbs
variational principle.

To this end, for each $m$, let $\{u_{mn}\}_{n=1}^{L_{m}-1}$ be a
orthonormal spanning set for the orthogonal complement of $\mathbf{1}_{L_{m}}$
in $\R^{L_{m}}$. Then for any $X$ that is feasible for (\ref{eq:maxent}),
the vectors 
\[
v_{mn}:=0\oplus\cdots\oplus\,0\,\oplus\,\underbrace{u_{mn}}_{\text{\ensuremath{m}-th slot}}\,\oplus\,0\,\oplus\cdots\oplus\,0\in\R^{L}
\]
must satisfy $v_{mn}^{\top}Xv_{mn}=0$, and in turn it follows (since
$X$ is positive semidefinite) that $Xv_{mn}=0$, i.e., $v_{mn}$
are null vectors for $X$.

Meanwhile, let 
\[
w_{m}:=0\,\oplus\cdots\oplus\,0\,\oplus\,\underbrace{\mathbf{1}_{m}/\sqrt{L_{m}}}_{\text{\ensuremath{m}-th slot}}\,\oplus\,0\,\oplus\cdots\oplus0\in\R^{L}.
\]
 Then the set $\{w_{m}\}$ completes $\{u_{mn}\}$ to an orthonormal
basis of $\R^{L}$, and the matrix of $X$ in the basis $\{w_{m},u_{mn}\}$,
where the $w_{m}$ are ordered to appear first, is 
\[
X=\left(\begin{array}{cc}
Y & 0\\
0 & 0
\end{array}\right),
\]
 where $Y_{mn}=w_{m}^{\top}Xw_{n}$. For such $X$, the block-diagonal
constraint of (\ref{eq:maxent}) is equivalent to having $Y_{mm}=w_{m}^{\top}Xw_{m}=L_{m}$.

Moreover, $S(X)=S(Y)$, so we can rephrase the problem (\ref{eq:maxent})
equivalently as: 
\begin{align*}
\underset{X\in\R^{M\times M}}{\text{minimize}}\ \  & S(Y)\\
\text{subject to}\ \  & Y_{mm}=L_{m},\quad m=1,\ldots,M,\\
 & Y\succeq0.
\end{align*}
 Then the Gibbs variational principle establishes that the unique
optimizer is 
\[
\tilde{Y}=e^{\mathrm{diag}(\lambda)}
\]
 for a suitable dual variable $\lambda\in\R^{M}$. In particular,
$\tilde{Y}$ is a diagonal matrix with diagonal $\tilde{Y}_{mm}=L_{m}$.

Then we can recover 
\[
\tilde{X}=\sum_{m=1}^{M}\tilde{Y}_{mn}w_{m}w_{n}^{\top}=\sum_{m=1}^{M}L_{m}w_{m}w_{m}^{\top}=\bigoplus_{m=1}^{M}\mathbf{1}_{L_{m}}\mathbf{1}_{L_{m}}^{\top},
\]
 i.e., $\tilde{X}=Q=X^{\star}$, as desired.
\end{proof}

\section{Proof of Theorem \ref{thm:reg2} \label{sec:thm2}}
\begin{proof}
[Proof of Theorem \ref{thm:reg2}.] In this proof we will make use
of the same reindexing that renders $Q$ as a block-diagonal matrix,
following (\ref{eq:Qblockdiag}) in the proof of Theorem \ref{thm:reg}.

First we will prove that the optimizer $X_{\beta}$ must be block-diagonal
after this reindexing. To show this, let $X_{\beta}'$ be the matrix
obtained from $X_{\beta}$ by zeroing all entries off the block diagonal.
Then our aim is to establish that $X_{\beta}'=X_{\beta}$.

Note that $\Tr[QX_{\beta}]=\Tr[QX_{\beta}']$, i.e., the linear objective
is unchanged by this replacement, due to the block-diagonality (\ref{eq:Qblockdiag})
of $Q$. Moreover, we claim that $S(X_{\beta}')\leq S(X_{\beta})$.
To see this, we will show that $X_{\beta}'$ is in fact the optimizer
of 
\begin{align}
\underset{X\in\R^{L\times L}}{\text{minimize}}\ \  & S(X)\label{eq:maxent-1}\\
\text{subject to}\ \  & X_{\mathcal{I}_{m},\mathcal{I}_{m}}=[X_{\beta}]_{\mathcal{I}_{m},\mathcal{I}_{m}},\quad m=1,\ldots,M,\nonumber \\
 & X\succeq0.\nonumber 
\end{align}
 Indeed, as in the proof of Theorem \ref{thm:reg}, the Gibbs variational
principle\footnote{If $X_{\beta}$ has a null space, we can quotient out by the null
space in the same fashion as in the proof of Theorem \ref{thm:reg},
then apply the Gibbs variational principle. In fact we will see concretely
later that $X_{\beta}$ is never singular for finite $\beta$.} implies that the the optimizer is block-diagonal, and since the diagonal
blocks are constrained to agree with those of $X_{\beta}$, the optimizer
is precisely $X_{\beta}'$. This implies that $S(X_{\beta}')\leq S(X_{\beta})$.

Finally, we claim that for all three regularized problems (\ref{eq:strongSDPreg}),
(\ref{eq:weakSDPreg}), and (\ref{eq:weakestSDPreg}), the substitution
$X_{\beta}\ra X_{\beta}'$ preserves feasibility. Indeed, the (non-reindexed)
diagonal blocks satisfy $\left[X_{\beta}'\right]^{(i,i)}=\mathbf{I}_{K^{(i)}}$
for all $i$, because $\mathrm{diag}(X_{\beta})=\mathrm{diag}(X_{\beta}')=\mathbf{1}_{L}$
and moreover $(i,k)$ and $(i,l)$ lie in the same set $\mathcal{I}_{m}$
only if $k=l$ (i.e., distinct keypoints in the same image correspond
to distinct registry points), so all the non-diagonal entries of $\left[X_{\beta}'\right]^{(i,i)}$
are zeroed out. This implies the claim.

In summary, $X_{\beta}'$ is feasible for (\ref{eq:strongSDPreg}),
(\ref{eq:weakSDPreg}), and (\ref{eq:weakestSDPreg}), and it improves
the value of the objective relative to the optimizer $X_{\beta}$.
But the optimizer is unique by strict convexity, so $X_{\beta}=X_{\beta}'$.

Then we can add the constraint that $X$ is block-diagonal (after
reindexing) to any of the regularized problems (\ref{eq:strongSDPreg}),
(\ref{eq:weakSDPreg}), and (\ref{eq:weakestSDPreg}), without changing
the optimizer. This block-diagonality, together with the constraint
that $\mathrm{diag}(X)=\mathbf{1}_{L}$, automatically implies $X^{(i,i)}=\mathbf{I}_{K^{(i)}}$,
as we have explained above. Since the von Neumann entropy splits additively
for block-diagonal arguments and since the cost matrix $C=-Q$ is
block-diagonal (cf. (\ref{eq:Qblockdiag})), it follows that the reindexed
diagonal block $\left[X_{\beta}\right]_{m}$ of $X_{\beta}$ is the
solution of the smaller problem 
\begin{align*}
\underset{Y\in\R^{L_{m}\times L_{m}}}{\text{minimize}}\ \  & -\mathbf{1}_{L_{m}}^{\top}Y\mathbf{1}_{L_{m}}+\beta^{-1}S(Y)\\
\text{subject to}\ \  & \mathrm{diag}(Y)=\mathbf{1}_{L_{m}},\\
 & Y\succeq0,
\end{align*}
 for each $m=1,\ldots,M$.

This problem is solved, via the Gibbs variational principle, as 
\[
Y=e^{\beta\,[\mathbf{1}_{L_{m}}\mathbf{1}_{L_{m}}^{\top}+\mathrm{diag}(\lambda)]},
\]
 where $\lambda\in L_{m}$ is a dual variable uniquely tuned to guarantee
$\mathrm{diag}(Y)=\mathbf{1}_{L_{m}}$.

It is not difficult to solve explicitly for $\lambda$. Consider the
ansatz $\lambda=\mu\mathbf{1}_{L_{m}}$, yielding 
\[
Y=e^{\beta\,[\mathbf{1}_{L_{m}}\mathbf{1}_{L_{m}}^{\top}+\mu\,\mathbf{I}_{L_{m}}]}=e^{\beta\mu}\,e^{\beta\,\mathbf{1}_{L_{m}}\mathbf{1}_{L_{m}}^{\top}}.
\]
 Now for normalized $u$ of general size, we can verify by Taylor
series expansion the identity 
\[
e^{\alpha uu^{\top}}=\mathbf{I}+(e^{\alpha}-1)uu^{\top}.
\]
 Applying in the case where $u=\mathbf{1}_{L_{m}}/\sqrt{L_{m}}$ and
$\alpha=\beta L_{m}$, we obtain 
\begin{equation}
Y=e^{\beta\mu}\left[\mathbf{I}_{L_{m}}+\frac{e^{\beta L_{m}}-1}{L_{m}}\,\mathbf{1}_{L_{m}}\mathbf{1}_{L_{m}}^{\top}\right].\label{eq:Ymu}
\end{equation}
 Then the diagonal entries of $Y$ are all 
\[
e^{\beta\mu}\left[1+\frac{e^{\beta L_{m}}-1}{L_{m}}\right].
\]
 We want to tune the dual variable $\mu$ so that this expression
equals $1$. Therefore we choose $\mu$ such that 
\[
e^{\beta\mu}=\frac{1}{1+\frac{e^{\beta L_{m}}-1}{L_{m}}}=\frac{L_{m}}{L_{m}+e^{\beta L_{m}}-1}.
\]

Then substituting into (\ref{eq:Ymu}), we obtain the optimizer $Y=[X_{\beta}]_{m}$:
\[
[X_{\beta}]_{m}=\tau_{\beta,m}\mathbf{I}_{L_{m}}+(1-\tau_{\beta,m})\mathbf{1}_{L_{m}}\mathbf{1}_{L_{m}}^{\top},
\]
 which is a convex combination of the identity matrix and the matrix
of all 1's, where the mixing coefficient $\tau_{\beta,m}\in(0,1)$
is given by 
\[
\tau_{\beta,m}=\frac{L_{m}}{L_{m}+e^{\beta L_{m}}-1}.
\]
 Note that 
\[
\lim_{\beta\ra\infty}\tau_{\beta,m}=0,
\]
 so 
\[
\lim_{\beta\ra\infty}\left[X_{\beta}\right]_{m}=\mathbf{1}_{L_{m}}\mathbf{1}_{L_{m}}^{\top}.
\]
 Since $X_{\beta}$ is block-diagonal (after reindexing), it follows
that $\lim_{\beta\ra\infty}X_{\beta}=Q$.
\end{proof}

\bibliographystyle{plain}
\bibliography{pps}

\end{document}